\numberwithin{equation}{section}
\newtheorem{theorem}{Theorem}
\newtheorem{definition}{Definition}
\newtheorem{lemma}{Lemma}
\def\ve{\varepsilon}
\title[MDP for Stochastic Schr\"odinger Equation]
{{The Law of Iterated Logarithm for a Linear Stochastic Schr\"odinger Equation}}
\author[P. Fatheddin]{Parisa Fatheddin}
\address{Department of Mathematics, Ohio State University, USA.}
\email{fatheddin.1@osu.edu}
\subjclass[2010]{Primary: 60F10; Secondary: 60H15, 60F05}
\keywords{Law of the iterated logarithm, moderate deviation principle, stochastic
partial differential equation, stochastic Schr\"odiner equations.}
\begin{document}
\begin{abstract}
The moderate deviation principle is achieved for a stochastic Schr\"odinger type equation by applying the classical Azencott method. The Friedlin-Wentzell inequality derived by this method is then used to prove the Strassen's compact law of the iterated logarithm for the equation.
\end{abstract}
\maketitle

\section{Introduction}
Many phenomena in the field of optics may be modeled by Schr\"odinger type equations. They are especially used in the study of lasers. For example, a stochastic Schr\"odinger equation is often studied to examine the effect of noise such as air temperature on laser propagation in random media. For this model and more information on the mathematical study of lasers we refer the reader to \cite{Andrews}, chapter 6 of \cite{Fatheddin1}, and chapter 3 of \cite{(39)}. In this paper we study some small noise asymptotics for the following general form of stochastic linear Schr\"odinger equation,
\begin{eqnarray}\label{originalequation}
du^{\varepsilon}(t)&=& i \Delta u^{\varepsilon}(t)dt +\mathcal{U}(t)u^{\varepsilon}(t)dt + \sqrt{\varepsilon} g(t,u^{\varepsilon}(t))dW(t),\\
u^{\varepsilon}(0) &=& \gamma \in V,\nonumber
\end{eqnarray}
where $0<\varepsilon<1$, $V:= H_{0}^{1}(G; \mathcal{C})$ for a bounded domain with smooth boundary $G$ in $\mathbb{R}^{d}$ for $1\leq d <\infty$, $W$ is a Q-Wiener process and $\mathcal{U}$ is a deterministic complex-valued function with properties given later in Section 2. Due to the similarity in structure, we note that the results also hold for cylindrical Wiener process. Letting $H:= L^{2}(G;\mathcal{C})$, the well-posedness of variational solutions to \eqref{originalequation} is implied by Theorem 2 of \cite{Lisei1} in space $L^{2}(\Omega; \mathcal{C}([0,T];H))\cap L^{2}(\Omega \times [0,T]; V)$, where in \cite{Lisei1} the nonlinear equation was considered. Moreover, the moderate deviation principle (MDP) for \eqref{originalequation} was established in \cite{Fatheddin2} in space $\mathcal{C}([0,T];H)$, where $H:= L^{2}(G;\mathcal{C})$ by the weak convergence approach. Here we concentrate on MDP by the Azencott method in $\mathcal{C}([0,T];H)$, which enables us to achieve the Strassen's compact law of the iterated logarithm (LIL) in the same space. In the study of LIL, Strassen's compact LIL is used in the setting of stochastic partial differential equations (SPDEs). For an introduction to other types of LIL such as Khintchine, Chover and Chung LIL, we recommend \cite{(6)}. \\
\indent The connection between large or moderate deviations and the Strassen's compact LIL was first observed by J. Deuschel and D. Stroock in Lemma 1.4.1 of \cite{Deuschel}. Afterwards, following the use of a family of contractions introduced in \cite{(3)}, authors such as those in \cite{Ouahra, Caramellino, Eddahbi} have proved the compact Strassen's LIL for their SPDEs as an application of their large deviation principle (LDP) or MDP result. The family of contractions were used to prove that the process formed for LIL is relatively compact. In this paper, we follow a more direct approach to achieve this property by noting a compact embedding of the space used. The Freidlin-Wentzell inequality, which is a key step in establishing the LDP or MDP by the Azencott method, is used in Strassen's compact LIL to verify that the set introduced in \cite{Deuschel} is the limit set of the process. The rate function from the LDP or MDP result is used to define this limit set forming the main connection to Strassen's compact LIL. For more results applying this form of connection between the two theorems we refer the reader to \cite{(23), (22), Ouahra, Wu}. \\
\indent The MDP is a special case of the LDP, where for MDP the considered process converges to its deterministic counterpart at a slower rate than that used for the LDP. The weak convergence approach, first introduced in \cite{(11), BDM}, was applied in \cite{(24)} to prove the LDP for a nonlinear stochastic Schr\"odinger equation studied in \cite{Lisei1} and as mentioned earlier, it was also applied in \cite{Fatheddin2} to establish the MDP for equation (1.1) above. The more classical approach, the Azencott method, was first introduced by \cite{Azencott, Priouret}. In the literature, regarding the stochastic Schr\"odinger equation, E. Gautier achieved the LDP by Azencott method for nonlinear stochastic Schr\"odinger with additive noise in \cite{22}, linear multiplicative noise in \cite{21} and fractional noise in \cite{23}. We are not aware of a result in the literature on the LIL on any type of Sch\"odinger equation. Also the novelty of the result in this paper is that it does not require the integrand of the stochastic integral to be bounded. For key estimates needed to prove the Azencott method, most authors such as those in \cite{Chow, Peszat, Eddahbi1, Nualart, Cerrai} have relied on this bounded property. We have the boundedness of the expectation of this integrand and have used different techniques than those applied in the literature to overcome this barrier.\\
\indent We will begin in Section 2 by introducing conditions and notations used throughout the paper along with stating the main results. Section 3 is devoted to the proof of the MDP by the Azencott method and is followed by the proof of the Strassen's compact LIL in Section 4. Furthermore, we offer an in depth comparison between the weak convergence approach used in \cite{Fatheddin2} and the Azencott method applied here in Section 5.

\section{Preliminaries}
\indent To achieve the MDP for $u^{\varepsilon}(t)$ given in \eqref{originalequation}, one forms the centered process, $u^{\varepsilon}(t)-u^{0}(t)$ and proves large deviations for this centered process multiplied by a rate of decay slower than that for large deviations. More precisely, we form,
\begin{equation}\label{(2.1)}
v^{\varepsilon}(t) = \frac{a(\varepsilon)}{\sqrt{\varepsilon}}(u^{\varepsilon}(t)-u^{0}(t)), \hspace{.25cm}\text{with} \hspace{.25cm} 0<a(\varepsilon)\rightarrow 0, \hspace{.25cm} \text{and} \hspace{.25cm} \frac{a(\varepsilon)}{\sqrt{\varepsilon}}\rightarrow \infty \hspace{.25cm} \text{as} \hspace{.25cm} \varepsilon \rightarrow 0,
\end{equation}
where $\sqrt{\varepsilon}$ is the rate used for large deviations. That is,
\begin{equation}\label{MDPequation}
v^{\varepsilon}(t) = -i \hspace{.08cm}\int_{0}^{t}  Av^{\varepsilon}(s) ds + \int_{0}^{t} \mathcal{U}(s)v^{\varepsilon}(s) ds + a(\varepsilon) \int_{0}^{t} \hat{g}(s,v^{\varepsilon}(s))dW(s),
\end{equation}
with
\begin{equation}
\hat{g}(t,v^{\varepsilon}(t)):= g\left(t, \frac{\sqrt{\varepsilon}}{a(\varepsilon)} v^{\varepsilon}(t) +  u^{0}(t)\right),\label{MDPg}
\end{equation}
where $A:= -\Delta : V\rightarrow V^{*}$, with $V^{*}$ denoting the dual space of $V$ and we have for $u,v\in V$,
\begin{equation} \label{eqA}
\langle Au,v\rangle = \int_G \nabla u(x)\nabla\overline{v}(x)dx, \mbox{ for each },
\end{equation}
where $\bar{u}$ is the complex conjugate of $u$. Recall that $H:= L^{2}(G;\mathcal{C})$ and $V:= H_{0}^{1}(G; \mathcal{C})$. We denote their respective norms as $\|\cdot\|$ and $\|\cdot\|_{V}$. The linear potential, $\mathcal{U}:[0,T]\times G\to \mathbb{C}$ is measurable and partially differentiable with respect to the spatial variable and is assumed to satisfy,
\begin{equation} \label{K5}
|\mathcal{U}(t,x)|^{2} + \sum_{j=1}^{n} \left|\frac{\partial \mathcal{U}(t,x)}{\partial x_{j}}\right|^{2} \leq k_{0}, \hspace{.7cm} \text{for all } t\in [0,T], x\in G,
\end{equation}
where, throughout the paper we let $k_{i}$ for $i\in \mathbb{N}$ to denote a bounded and positive constant. Furthermore, $W$ is a $Q$-Wiener process written as a sum of independent one-dimensional real Brownian motions, $\beta_{j}$, as $W(t)= \sum_{j}^{\infty} \sqrt{\lambda_{j}}\beta_{j}(t) e_{j}$, where for each $j\in \mathbb{N}$, $\lambda_{j}$ is an eigenvalue satisfying, $Qe_{j}= \lambda_{j}e_{j}$, for a complete orthonormal system, $\{e_{j}\}_{j\geq 1}$ in $H$ and $Q$ is the covariance operator of the $Q$-Wiener process. We also let $H_{0}:= Q^{1/2}H$ and let its norm and inner product be denoted as $|u|_{0}$ and $(u,v)_{0}:= (Q^{1/2}u, Q^{1/2}v)$, respectively for $u,v\in H_{0}$. To match the setting in \cite{Fatheddin2}, we impose the following conditions on $g: [0,T]\times H\rightarrow L_{2}(H_{0},H)$ in \eqref{originalequation} for Hilbert Schmidt spaces $L_{2}(H_{0},H)$ and $L_{2}(H_{0},V)$.
\begin{align}
\|g(t,u)\|_{L_{2}(H_{0},H)}^{2} &\leq  k_{1}\hspace{.08cm}(1+\|u\|^{2}), \hspace{.3cm} \text{for all } \hspace{.3cm} u\in H, \label{(2.6)}\\
\|g(t,u)\|^{2}_{L_{2}(H_{0},V)} &\leq   k_{2}\hspace{.08cm}(1+\|u\|_{V}^{2}), \hspace{.3cm} \text{for all} \hspace{.3cm} u\in V,\label{(2.7)}\\
\|g(t,u)-g(t,v)\|_{L_{2}(H_{0},H)}^{2}&\leq  k_{3}\hspace{.08cm}\|u-v\|^{2}, \hspace{.3cm} \text{for all} \hspace{.3cm} u,v\in H,\label{(2.8)}\\
 {\|g(t,u)-g(t,v)\|_{L_{2}(H_{0},V)}^{2}} & \leq { k_{4}\hspace{.08cm}\|u-v\|^{2}_V, \hspace{.3cm} \text{for all} \hspace{.3cm} u,v\in V,} \label{K4}\\
\|g(t_{1},u)-g(t_{2},u)\|^{2}_{L_{2}(H_{0},H)} &\leq  k_{5}\hspace{.08cm}\|t_{1}-t_{2}\|^{2} \hspace{.3cm} \text{for all} \hspace{.3cm}u\in H, \hspace{.08cm} t_{1}, t_{2}\in [0,T].\label{Holder}
\end{align}
Then based on the results in \cite{Lisei1} and \cite{Fatheddin2} we have the following estimates for any $1\leq p <\infty$ and
\begin{flalign}
&\mathbb{E}\sup_{0\leq t\leq T} \|u^{\varepsilon}(t)\|^{2p} \leq   N_{2p}(T)\hspace{.08cm} (\varepsilon +\|\gamma\|^{2p}), \hspace{.05cm} \text{ and } \hspace{.05cm} \mathbb{E} \int_{0}^{T} \|u^{\varepsilon}(t)\|_{V}^{2}dt \leq N(T) (\varepsilon + \|\gamma\|_{V}^{2}),&\label{uHbound}\\
&\mathbb{E} \sup_{0\leq t\leq T} \|v^{\varepsilon}(t)\|^{2p}\leq a^{2}(\varepsilon)   {\bar N}_{2p}(T), \hspace{.05cm}\text{ and } \hspace{.05cm}
\mathbb{E}\sup_{0\leq t\leq T} \|v^{\varepsilon}(t)\|_{V}^{2} \leq \bar K (T). &\label{vVnorm}
\end{flalign}
See Theorem 2 in \cite{Lisei1} and Theorem 3 and the Appendix in \cite{Fatheddin2}. For $h\in L^{2}(0,T;H_{0})$ taking values in the Cameron-Martin space $\mathcal{H}_{0}$ defined as,
\begin{equation}\label{Cameron}
\mathcal{H}_{0}:= \left\{h\in L^{2}([0,T];H_0): h \text{ is absolutely continuous and } \int_{0}^{T}|\dot{h}(s)|^{2}_0ds<M \right\},
\end{equation}
where $M>0$ is a bounded constant and $\dot{h}(\cdot)$ denotes the derivative in time, we also consider the following deterministic equation, referred to as the skeleton equation,
\begin{equation}\label{Xhcontrolled}
X^{h}(t)= - i \int_{0}^{t} AX^{h}(s)ds + \int_{0}^{t} \mathcal{U}(s) X^{h}(s)ds + \int_{0}^{t} g(s,u^{0}(s))\dot{h}(s)ds,
\end{equation}
for which similar to the proof of Theorem 3 and Appendix in \cite{Fatheddin2} we may obtain,
\begin{equation}
\sup_{0\leq t\leq T} \|X^{h}(t)\|^{2p}\leq \hat{N}_{2p}(T, M), \hspace{.5cm} \text{ and }\hspace{.5cm} \sup_{0\leq t\leq T} \|X^{h}(t)\|_{V}^{2} \leq \hat{K} (T,M)\ \mbox{a.s.} \label{XhVnorm}
\end{equation}
Furthermore, we assume
\begin{equation} \label{epsi}
 0<\varepsilon, a(\varepsilon)< 1\hspace{.25cm} \mbox{ and }\hspace{.25cm} \frac{\varepsilon}{a^2(\varepsilon)}<1,
\end{equation}
and prove the following theorems.
\begin{theorem} \label{TH1}
Suppose conditions \eqref{(2.6)}-\eqref{Holder} hold and let $\varepsilon_{0}\in (0,1)$. Then the family of solutions  $\{u^{\varepsilon} \}_{ \varepsilon\in(0,\varepsilon_{0})}$ satisfies the moderate deviation principle in $\mathcal{C}([0,T];H)$ with rate function
\begin{align*}
I(v) = \left\{\begin{array}{ll}
& \displaystyle \frac{1}{2}\inf\left\{ \int_{0}^{T} |\dot{h}(s)|_0^{2}ds: h\in \mathcal{H}_0, v=\mathcal{G}\Big(\int_{0}^{\cdot}h(s)ds\Big)\right\},\\
&\infty,   \hspace{3cm} \text{otherwise},
\end{array}\right.
\end{align*}
where $ \mathcal{G}\Big(\int_{0}^{\cdot}h(s)ds\Big)$ is the unique variational solution corresponding to $h\in L^{2}([0,T];H_{0})$ to the skeleton equation \eqref{Xhcontrolled}.
\end{theorem}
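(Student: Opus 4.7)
The plan is to implement the classical Azencott method, whose two ingredients are (i) well-posedness and continuity of the skeleton map $h \mapsto X^h$, and (ii) a Freidlin--Wentzell exponential inequality comparing $v^\varepsilon$ with $X^h$ when the driving noise is close to the shift $\int_0^\cdot \dot h(s)\,ds$.

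First I would show that for each $h\in\mathcal{H}_0$ the linear equation \eqref{Xhcontrolled} admits a unique variational solution $X^h$ satisfying \eqref{XhVnorm}. This is a standard Galerkin/monotonicity argument in which the energy estimate for $\|X^h\|^2$ simplifies thanks to the skew-symmetry $\mathrm{Re}\langle -iAX^h,X^h\rangle = 0$; uniqueness and the Gr\"onwall estimate then follow from \eqref{K5} and \eqref{(2.6)}. Continuity of $h\mapsto X^h$ under weak convergence of $\dot h$ in $L^2([0,T];H_0)$ then yields that $I$ is a good rate function.

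The main technical obstacle is the Freidlin--Wentzell inequality: for every $R,\rho>0$ there should exist $\alpha,\varepsilon_0>0$ such that for every $h$ with $\tfrac12\int_0^T|\dot h(s)|_0^2\,ds\le R$ and every $\varepsilon\in(0,\varepsilon_0)$,
\[
\mathbb{P}\Bigl(\sup_{0\le t\le T}\|v^\varepsilon(t)-X^h(t)\|\ge\rho,\ \sup_{0\le t\le T}\Bigl\|a(\varepsilon)W(t)-\int_0^t\dot h(s)\,ds\Bigr\|\le\alpha\Bigr)\le \exp\bigl(-R/a^2(\varepsilon)\bigr).
\]
To produce this, I would subtract \eqref{Xhcontrolled} from \eqref{MDPequation}, apply the real It\^o formula to $\|v^\varepsilon-X^h\|^2$ (the $-iA$ term drops out by skew-symmetry), and bound the drift contribution using \eqref{K5} together with the Lipschitz estimate \eqref{(2.8)} applied to
\[
\hat g(s,v^\varepsilon)-g(s,u^0)=g\Bigl(s,\tfrac{\sqrt\varepsilon}{a(\varepsilon)}v^\varepsilon+u^0\Bigr)-g(s,u^0),
\]
which produces an extra term of order $(\sqrt\varepsilon/a(\varepsilon))^2\|v^\varepsilon\|^2$ controlled by \eqref{vVnorm} and \eqref{epsi}. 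A Gr\"onwall step then reduces matters to an exponential bound on the stochastic integral.

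The real difficulty is that $\hat g(s,v^\varepsilon)$ is not bounded, so the Bernstein-type martingale inequalities used in \cite{Chow, Peszat, Eddahbi1, Nualart, Cerrai} do not apply directly. My plan is to localize by $\tau_N := \inf\{t:\|v^\varepsilon(t)\|_V\vee\|u^\varepsilon(t)\|_V > N\}$. Chebyshev and the moment bounds \eqref{uHbound}--\eqref{vVnorm} give $\mathbb{P}(\tau_N<T)\le C/N^2$, which can be made much smaller than $\exp(-R/a^2(\varepsilon))$ by letting $N=N(\varepsilon)$ grow at a suitable rate as $\varepsilon\downarrow 0$. On $\{\tau_N\ge T\}$ the integrand is bounded by a power of $N$, so a Hilbert-space Bernstein inequality delivers the required exponential estimate, and $N(\varepsilon)$ is tuned to balance the two contributions. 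Once the Freidlin--Wentzell inequality is established, Girsanov's theorem applied to the shift $W\mapsto W+a(\varepsilon)^{-1}\int_0^\cdot\dot h(s)\,ds$, combined with the continuity of the skeleton map, yields the MDP upper bound on compact sets and the lower bound on open sets in the standard Azencott manner; exponential tightness of $\{v^\varepsilon\}$ in $\mathcal{C}([0,T];H)$, needed to pass from compact to closed sets, follows from the same exponential estimates together with a modulus-of-continuity argument.
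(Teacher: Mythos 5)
The high-level outline (skeleton well-posedness and continuity, Girsanov reduction, exponential estimate for the stochastic integral) matches the Azencott strategy of Section~3. However, the key step where you handle the unbounded integrand is where your proposal breaks, and this is precisely the part the paper identifies as the novel difficulty.

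Your localization by $\tau_N := \inf\{t:\|v^\varepsilon(t)\|_V\vee\|u^\varepsilon(t)\|_V > N\}$ cannot be tuned to close the argument. The moment estimates \eqref{uHbound}--\eqref{vVnorm} are polynomial, so Chebyshev only gives $\mathbb{P}(\tau_N<T)\lesssim C_p N^{-2p}$ for each fixed $p$; to push this below $\exp(-R/a^2(\varepsilon))$ you must take $N=N(\varepsilon)$ growing at least like $\exp(cR/a^2(\varepsilon))$. But on $\{\tau_N\ge T\}$ the Hilbert-space Bernstein inequality for a martingale whose integrand is bounded by a power of $N$ yields a tail of the form $\exp\bigl(-c\rho^2/(a^2(\varepsilon)\,K(N))\bigr)$ with $K(N)\to\infty$; beating $\exp(-R/a^2(\varepsilon))$ there forces $K(N)$, hence $N$, to stay \emph{bounded}. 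These two requirements on $N(\varepsilon)$ are incompatible, so the ``tuning'' you invoke does not exist. (No sub-exponential moment control of $\|v^\varepsilon\|_V$ is available from the hypotheses, so you cannot upgrade Chebyshev to an exponential tail either.)

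The paper sidesteps this by \emph{not} localizing. After the Girsanov reduction to \eqref{(3.3)} it introduces a dyadic time discretization $\hat\psi_n$ and splits the F--W probability into \eqref{(3.6)}--\eqref{(3.7)}. On each small interval $[t_i^n,t_{i+1}^n]$ of length $T/2^n$ the stochastic integral's $2k$-th moment is estimated via Burkholder--Davis--Gundy with the explicit constant $(3p^3)^p$ (Theorem~1.76 of \cite{Pardoux}), giving a bound of order $(2(24)^2k_1N_2(T,M))^k\,k^k\,(k^5 T/2^n)^k$. Choosing $n$ large so that $k^5T/2^n\le 1$ and using $k^k/k!\le e^k$, one sums the series and obtains a finite bound on $\mathbb{E}\exp(\lambda|\cdot|^2)$ for $\lambda$ below an explicit threshold (the Seidler argument, Prop.~2.1 of \cite{Seidler}). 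Doob's inequality then converts this into the required $\exp(-cR\log\log(1/\varepsilon))$ tail. The essential point is that the shortness of the discretization interval supplies the small factor needed for the exponential series to converge, which replaces any boundedness of the integrand and any localization. If you want to keep a localization flavor, you would need an a priori exponential moment bound for $\sup_t\|v^\varepsilon(t)\|_V$, which neither the hypotheses nor \eqref{vVnorm} provide.

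One smaller remark: the Azencott method needs continuity of $h\mapsto X^h$ in the topology in which the Schilder-type LDP for $(1/\sqrt{2\log\log(1/\varepsilon)})W$ holds, i.e.\ the uniform topology of $\mathcal{C}([0,T];H_0)$, restricted to the level sets $\{\|h\|_{\mathcal{H}_0}\le a\}$. The paper's Lemma~1 proves exactly this, again with a time discretization so that the $\int\|X^h-X^k\|\,|\dot h-\dot k|$ term is controlled by $\|h-k\|_\infty$. Stating continuity ``under weak convergence of $\dot h$ in $L^2$'' is what the weak-convergence method uses; it can be reconciled with Azencott on the compact balls $S_M$, but as written it conflates the two frameworks and should be replaced by the uniform-topology statement.
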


\begin{theorem} \label{TH2}
For $\varepsilon_{0}\in (0, 1/10)$ and under conditions \eqref{(2.6)}-\eqref{(2.8)}, the family of solutions, $\{u^{\varepsilon}\}_{\varepsilon\in (0,\varepsilon_{0})}$ satisfies the Strassen's compact LIL with the limit set
 $$L:= \left\{v\in \mathcal{C}([0,T];H): I(v)\leq \frac{M}{2}\right\} ,$$ where $M$ is a positive, bounded constant and $I(v)$ is the rate function of the moderate deviations of $\{u^{\varepsilon}\}_{\varepsilon \in (0,\varepsilon_{0})}$.
\end{theorem}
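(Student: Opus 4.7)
The plan is to follow the Deuschel--Stroock connection between the moderate deviation principle and Strassen's LIL (Lemma~1.4.1 of \cite{Deuschel}). Setting $a(\varepsilon)=(2\log\log(1/\varepsilon))^{-1/2}$ makes \eqref{(2.1)} the LIL normalisation while still meeting the MDP hypotheses of Theorem~\ref{TH1}; the restriction $\varepsilon_{0}<1/10$ ensures that along a geometric sequence $\varepsilon_{n}=e^{-n}$ one has $a(\varepsilon_{n})\in(0,1)$ and $\varepsilon_{n}/a^{2}(\varepsilon_{n})<1$, i.e.\ condition \eqref{epsi}. Writing $Z_{n}:=v^{\varepsilon_{n}}$, Theorem~\ref{TH2} reduces to three almost-sure statements: (a) $\{Z_{n}\}$ is relatively compact in $\mathcal{C}([0,T];H)$, (b) every limit point of $\{Z_{n}\}$ lies in $L$, and (c) every $v\in L$ is a limit point of $\{Z_{n}\}$. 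The Deuschel--Stroock scheme then identifies the a.s.\ limit set of $\{Z_{n}\}$ as $L$, after which a routine uniform-in-$\varepsilon$ comparison on each dyadic interval $[\varepsilon_{n+1},\varepsilon_{n}]$ extends the statement from the sequence to the full family.

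For (a), rather than appealing to the family-of-contractions device of \cite{Ouahra, Caramellino, Eddahbi}, I would exploit the compact Sobolev embedding $V=H^{1}_{0}(G;\mathbb{C})\hookrightarrow H$ (Rellich--Kondrachov), as announced in the introduction. The uniform $V$-bound $\mathbb{E}\sup_{t\le T}\|Z_{n}\|_{V}^{2}\le \bar K(T)$ in \eqref{vVnorm}, combined with a uniform time-modulus for $Z_{n}$ in $H$ obtained from \eqref{MDPequation} and the integrability conditions \eqref{(2.6)}--\eqref{Holder}, yields equicontinuous $V$-bounded paths; Arzel\`a--Ascoli then gives relative compactness in $\mathcal{C}([0,T];H)$, and a Borel--Cantelli passage converts the moment bounds into almost-sure statements.

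For (b), fix $\delta>0$. Because $I$ is a good rate function---its sublevel sets are compact in $\mathcal{C}([0,T];H)$ by the same embedding argument applied to \eqref{XhVnorm}---cover $L$ by finitely many $\delta/2$-balls centred at skeleton solutions $X^{h_{1}},\dots,X^{h_{N}}$. The Freidlin--Wentzell inequality furnished by the Azencott proof of Theorem~\ref{TH1} in Section~3 supplies
\begin{equation*}
\mathbb{P}\bigl(\|Z_{n}-X^{h_{j}}\|_{\mathcal{C}([0,T];H)}>\delta/2\bigr)\le \exp\!\bigl(-(I(X^{h_{j}})+c)\,a^{-2}(\varepsilon_{n})\bigr)
\end{equation*}
for some $c=c(\delta)>0$. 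Since $a^{-2}(\varepsilon_{n})=2\log n$, these probabilities are summable in $n$, and Borel--Cantelli forces $\mathrm{dist}_{\mathcal{C}([0,T];H)}(Z_{n},L)\to 0$ almost surely.

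For (c), pick $v=\mathcal{G}(\int_{0}^{\cdot}h\,ds)\in L$ with $I(v)<M/2$. The MDP lower bound, which the Azencott method produces alongside the FW upper bound, gives $\mathbb{P}(\|Z_{n}-v\|_{\mathcal{C}([0,T];H)}<\delta)\ge \exp(-(I(v)+\eta)a^{-2}(\varepsilon_{n}))\gtrsim n^{-(M+2\eta)/2}$, which is non-summable for $\eta$ small. Because all $Z_{n}$ share the driving noise $W$ the events $\{\|Z_{n}-v\|<\delta\}$ are \emph{not} independent, so the converse Borel--Cantelli must be applied to an auxiliary sequence $\widetilde Z_{n}$ coupled to $Z_{n}$ and driven by increments of $W$ on disjoint time blocks; a Gronwall comparison using \eqref{(2.8)}--\eqref{K4} together with the FW tail then transfers the infinite-occurrence conclusion from $\widetilde Z_{n}$ to $Z_{n}$. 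I expect this coupling step to be the main obstacle: because the integrand $\hat g$ in \eqref{MDPg} is not assumed uniformly bounded---the technical innovation emphasised in the introduction---the usual sup-bound arguments fail, and the Gronwall estimate must be closed using only the $V$-moment controls \eqref{vVnorm}, \eqref{XhVnorm} and the Lipschitz structure \eqref{(2.8)}--\eqref{K4}.
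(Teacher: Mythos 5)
Your overall plan (Deuschel--Stroock decomposition into relative compactness, upper inclusion, and lower attainment along a geometric subsequence) matches the paper's, but two of your three legs diverge from the published route, and one of them hides a genuine gap that the paper's argument avoids.

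The crucial difference is in part (c). You treat the non-independence of the events $\{\|Z_{n}-v\|<\delta\}$ as the main obstacle and propose a coupling of $Z_{n}$ to auxiliary processes driven by increments of $W$ on disjoint time blocks, closed by a Gronwall estimate. The paper sidesteps this entirely: rather than applying a converse Borel--Cantelli to $Z_{n}$, it invokes \emph{Strassen's LIL for the driving $Q$-Wiener process $W$ itself} (Strassen 1964), which already gives, almost surely, that $(2\log\log c^{j})^{-1/2}W$ comes within $\eta$ of any fixed $h$ infinitely often. The Freidlin--Wentzell inequality \eqref{(3.2)} then bounds the probability of the bad event $\{\|Z^{1/c^{j}}-X^{h}\|_{\infty}>\rho\}$ \emph{intersected with} the good event $\{\|(2\log\log c^{j})^{-1/2}W-h\|_{\infty,0}<\eta\}$ by $\exp(-2R\log\log c^{j})\le K j^{-2R}$, which is summable, so ordinary Borel--Cantelli removes the bad event eventually. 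Combining the two facts gives $\|Z^{1/c^{j}}-X^{h}\|_{\infty}\le\rho$ infinitely often a.s.\ without ever needing independence, and without the moment-only controls \eqref{vVnorm}, \eqref{XhVnorm} having to close a Gronwall loop under the unbounded-integrand restriction you flag. Your coupling route may well be forceable, but it is substantially harder than what is available and is not the obstacle you think it is.

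A secondary point in (b): the Freidlin--Wentzell inequality does not yield the unconditional tail bound $\mathbb{P}(\|Z_{n}-X^{h_{j}}\|>\delta/2)\le\exp(-(I(X^{h_{j}})+c)a^{-2}(\varepsilon_{n}))$ that you wrote---it bounds a probability \emph{jointly} with the event that the scaled Wiener path is $\eta$-near $h_{j}$. For the upper inclusion the paper instead applies the MDP upper bound from Theorem~\ref{TH1} directly to the closed set $J=\{g:\|g-L\|_{\infty}\ge\varepsilon\}$, on which the rate function exceeds $M/2+\delta$, getting $\mathbb{P}(Z^{1/c^{j}}\in J)\le j^{-(M+2\delta)}$ and then Borel--Cantelli. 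That is cleaner than a finite cover plus FW and has no conditioning issue. Finally, for (a) the paper deduces tightness from boundedness in $W^{\alpha,2}(0,T;V)$ via Flandoli--Gatarek rather than Arzel\`a--Ascoli; both rely on the $V$-bound \eqref{vVnorm} and the compactness of $V\hookrightarrow H$, so that choice is a matter of packaging rather than substance.
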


\section{MDP by Azencott Method} \label{sec4}

\indent We begin by describing the Azencott method implemented here. Let  $(\mathcal{E}_{1},d_1)$ and $(\mathcal{E}_{2},d_2)$ be two Polish spaces.
Suppose $\{Y_{1}^{\varepsilon}\}_{\varepsilon>0}$ is a family of random variables that takes values in $\mathcal{E}_{1}$ and satisfies the large deviations principle with rate function  $\widetilde{I}:\mathcal{E}_1\to [0,\infty]$.
Consider the map $\Phi:\{g\in \mathcal{E}_1:\widetilde{I}(g)<\infty\} \rightarrow \mathcal{E}_{2}$ whose restriction to   compact sets  $\{\widetilde{I}\leq a\}_{a>0}$ is continuous in the topology of $\mathcal{E}_{1}$. For any $f\in \mathcal{E}_{2}$ define $$I(f)= \inf\{\widetilde{I}(g): g\in \mathcal{E}_1,\Phi(g)=f\}.$$

For another family, $\{Y_{2}^{\varepsilon}\}_{\varepsilon>0}$, the LDP holds with rate function $I$, if for any   $a>0$, $R>0, \rho>0$, there exist constants $\eta>0$ and $\varepsilon_{0}>0$ such that for all $0<\varepsilon\leq \varepsilon_{0}$ and $g\in \mathcal{E}_1$ with $\widetilde{I}(g) \le a$,
\begin{equation}\label{FreidlinWentzell}
P\Big(d_{2}\left(Y_{2}^{\varepsilon}, \Phi(g)\right)\ge \rho, d_{1}\left(Y_{1}^{\varepsilon}, g\right)<\eta \Big)\leq \exp\left(-\frac{R}{\varepsilon^{2}}\right).
\end{equation}
The above inequality is referred to as the Freidlin-Wentzell inequality. For examples of results in LDP using the Azencott method we refer the reader to \cite{Cardon, Chenal, (22), (23), Nualart}. We note that the proof given in this section is sufficient to verify the MDP for any choice of $a(\varepsilon)$ satisfying conditions given in \eqref{(2.1)}. Here since the Strassen's compact LIL can be shown as a consequence of the MDP, we prove the MDP in the case, $a(\varepsilon):= 1/\sqrt{2\log \log(1/\varepsilon)}$ and form the process,
\begin{equation}\label{formofZ}
Z^{\varepsilon}(t)= \frac{u^{\varepsilon}(t)-u^{0}(t)}{\sqrt{2\varepsilon \log \log \frac{1}{\varepsilon}}}.
\end{equation}
\noindent More precisely, we consider $Z^{\varepsilon}$ to be the variational solution of
\begin{equation}\label{(2.10)}
Z^{\varepsilon}(t) = -i \hspace{.08cm}\int_{0}^{t}\! AZ^{\varepsilon}(s) ds +  \int_{0}^{t}\!\mathcal{U}(s)Z^{\varepsilon}(s)ds+ \frac{1}{\sqrt{2\log \log \frac{1}{\varepsilon}}} \int_{0}^{t} \!\!\widetilde{g}(s,Z^{\varepsilon}(s))dW(s),
\end{equation}
for $t\in[0,T]$ and a.e. $\omega\in \Omega$, where
\begin{equation}
   \widetilde{g}(t,Z^{\varepsilon}(t)):= g\left(t, \sqrt{2\varepsilon \log \log \frac{1}{\varepsilon}}Z^{\varepsilon}(t) + u^{0}(t)\right).\label{(2.11)}
\end{equation}
Notice that $a(\varepsilon)=1/\sqrt{2\log \log(1/\varepsilon)}$ satisfies both conditions in \eqref{(2.1)} since it converges to zero at a slower rate than $\sqrt{\varepsilon}$ and for it to yield a real number, we require $\varepsilon < 1/10$. We further consider $\ve_0\le \frac{1}{10^{\sqrt{10}}}<1/10$ so that \eqref{epsi} holds for each $\ve\in(0,\varepsilon_{0})$.
Since \eqref{vVnorm} is true for any $a(\varepsilon)$ satisfying condition in \eqref{(2.1)} and \eqref{epsi}, we obtain by \eqref{vVnorm},
$$
\mathbb{E} \sup_{0\leq t\leq T} \| {Z}^{\varepsilon}(t)\|^{2p}\leq {\bar N}_{2p}(T)\hspace{.2cm} \mbox{ for }\hspace{.2cm}p\ge 1,\hspace{.25cm}
\mbox{ and }\hspace{.2cm}\mathbb{E}\sup_{0\leq t\leq T} \|{Z}^{\varepsilon}(t)\|_{V}^{2}  \leq  \bar K (T),$$
where we noted that $a(\varepsilon)<1$. In the setting of SPDEs, the map $\Phi$, given in the definition of the Azencott method, is the map $h\mapsto X^{h} $, with $X^{h}$ being the unique variational solution to \eqref{Xhcontrolled}. Namely,
\begin{equation}\label{controlled}
(X^{h}(t),v) = - i \hspace{.08cm}\int_{0}^{t} \langle AX^{h}(s),v\rangle ds + \int_{0}^{t} (\mathcal{U}(s)X^{h}(s),v)ds + \int_{0}^{t}( g(s,u^{0}(s))\dot{h}(s),v)ds,
\end{equation}
for all $t\in[0,T]$, $v\in V$. Also we know by an extended version of the Schilder's theorem that $Y_{1}^{\varepsilon}= (1/\sqrt{2\log \log(1/\varepsilon)})W$ satisfies the large deviation principle, where $W$ is a $Q$-Wiener process (see Theorem 3.1 in Chapter 6 of \cite{(1)}).\\
\indent We denote by $\|\cdot\|_{\mbox{\scalebox{0.5}{$\infty$}}}$ the norm in the space $\mathcal{C}([0,T];H)$    and by $ \|\cdot \|_{\mbox{\scalebox{0.5}{$\infty,0$}}}$  the norm in $\mathcal{C}([0,T];H_0)$ and devote the rest of the section to proving the conditions required by the Azencott method. For better presentation, we let $C_{i}$ for $i\in \mathbb{N}$ represent bounded positive constants.

\begin{lemma}
The map $h\mapsto X^{h}$ is continuous in the uniform topology when restricted on the set $\{\|h\|_{\mathcal{H}_{0}} \leq a\}$ for any fixed $a>0$.
\end{lemma}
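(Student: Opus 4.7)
My plan is to use a compactness and uniqueness argument, because a naive energy estimate on $Y_n := X^{h_n} - X^h$ only yields a Gronwall bound involving $\|\dot h_n - \dot h\|_{L^2(0,T;H_0)}$, which need not vanish under uniform convergence in $\mathcal{C}([0,T];H_0)$; this is the principal obstacle. Fix $\{h_n\} \subset \{\|\cdot\|_{\mathcal{H}_0} \leq a\}$ with $h_n \to h$ uniformly and aim for $\sup_t \|Y_n(t)\|_H \to 0$. As a preliminary, $\{\dot h_n\}$ is bounded in $L^2(0,T;H_0)$ and $h_n(t) = \int_0^t \dot h_n(s)\,ds$ converges pointwise to $h(t)$, so any weak accumulation point of $\{\dot h_n\}$ must equal $\dot h$, giving $\dot h_n \rightharpoonup \dot h$ weakly in $L^2(0,T;H_0)$.

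I next prove that $\{Y_n\}$ is relatively compact in $\mathcal{C}([0,T];H)$. Estimate \eqref{XhVnorm} supplies the uniform pointwise bound $\|Y_n(t)\|_V \leq 2\sqrt{\hat K(T,a^2)}$, and pointwise precompactness in $H$ follows from the compact embedding $V \Subset H$, valid on the bounded, smooth domain $G$. For equicontinuity I exploit the interpolation $\|f\|_H \leq \|f\|_V^{1/2}\|f\|_{V^*}^{1/2}$ together with a $V^*$-modulus estimate derived directly from the equation for $Y_n$,
\[
\|Y_n(t_2) - Y_n(t_1)\|_{V^*} \leq C_1|t_2-t_1| + C_2\sqrt{|t_2-t_1|},
\]
where $C_1$ absorbs the $A$- and $\mathcal{U}$-contributions (using the uniform $V$- and $H$-bounds and \eqref{K5}) and $C_2$ the control term via Cauchy--Schwarz and $\|\dot h_n\|_{L^2} \leq a$. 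Arzelà--Ascoli then yields relative compactness.

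Extract a convergent subsequence $Y_{n_k} \to Y^*$ in $\mathcal{C}([0,T];H)$ and pass to the limit in the equation to identify $Y^*$. The critical step is the control integral $\eta_{n_k}(t) := \int_0^t g(s,u^0(s))(\dot h_{n_k}(s) - \dot h(s))\,ds$; I claim the operator $L: L^2(0,T;H_0) \to \mathcal{C}([0,T];H)$ defined by $L(f)(t) := \int_0^t g(s,u^0(s))f(s)\,ds$ is compact. Using \eqref{(2.7)}, $u^0 \in L^2(0,T;V)$, and Cauchy--Schwarz, $\|L(f)(t)\|_V \leq C\|f\|_{L^2(H_0)}$, giving pointwise $H$-precompactness via $V \Subset H$, while Cauchy--Schwarz in time provides equicontinuity. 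Hence $\dot h_{n_k} - \dot h \rightharpoonup 0$ upgrades to $\eta_{n_k} \to 0$ strongly in $\mathcal{C}([0,T];H)$. The $\mathcal{U}$-term passes to the limit strongly in $H$ and the $A$-term weakly in $L^2(0,T;V^*)$ (since $Y_{n_k} \rightharpoonup Y^*$ weakly in $L^2(0,T;V)$), so $Y^*(t) = -i\int_0^t A Y^*(s)\,ds + \int_0^t \mathcal{U}(s) Y^*(s)\,ds$ with $Y^*(0)=0$. The energy identity---exploiting that $\mathrm{Re}\langle -iAY^*,Y^*\rangle = 0$ since $\langle AY^*,Y^*\rangle = \int_G |\nabla Y^*|^2\,dx$ is real---together with Gronwall forces $Y^* \equiv 0$. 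Since every subsequence of $\{Y_n\}$ admits a sub-subsequence converging to $0$, the full sequence converges to $0$ in $\mathcal{C}([0,T];H)$.
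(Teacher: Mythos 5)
Your proof is correct and takes a genuinely different route from the paper. The paper proceeds by a direct energy estimate on $X^h - X^k$: it runs It\^o/Gronwall on $\|X^h - X^k\|^2$ and then confronts the same obstacle you identify—the control term $\int_0^t \|X^h(s)-X^k(s)\|\,|\dot h(s)-\dot k(s)|\,ds$—by a time-discretization trick (following Cardon-Weber and Chenal--Millet): replacing $X^h - X^k$ by a piecewise constant approximation on a dyadic grid so that $\dot h - \dot k$ is integrated over each cell, producing increments of $h-k$ controlled by $\|h-k\|_\infty$ and the uniform bound \eqref{XhVnorm}. You instead run an Aubin--Lions-style compactness-and-uniqueness argument: weak $L^2$ convergence of $\dot h_n$ from uniform boundedness plus uniform convergence, relative compactness of $\{Y_n\}$ in $\mathcal{C}([0,T];H)$ via the uniform $V$-bound, the interpolation $\|\cdot\|_H \leq \|\cdot\|_V^{1/2}\|\cdot\|_{V^*}^{1/2}$ and a $V^*$-modulus from the equation, compactness of the control-to-state map $L$ (using \eqref{(2.7)} and $V\Subset H$) to upgrade the weak convergence of $\dot h_{n_k}-\dot h$ to strong convergence of $\eta_{n_k}$, and finally uniqueness of the linear limit equation. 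The paper's approach, if carried out with full care, is quantitative (it yields a modulus of continuity in $\|h-k\|_\infty$), whereas yours establishes only sequential continuity; but since the restricted domain with the uniform topology is metric, sequential continuity is all the lemma asserts, and your route is conceptually cleaner, avoids the delicate bookkeeping of the dyadic summation-by-parts step, and transfers without change to the weaker topology (weak-$L^2$ on $\dot h$) used in the weak-convergence formulation discussed in Section 5.
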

 \begin{proof}
 \indent Let $a>0$ and choose $\|h\|_{\mathcal{H}_{0}}\vee \|k\|_{\mathcal{H}_{0}}\leq a$. Using equation \eqref{Xhcontrolled} we obtain,
\begin{flalign*}
 \sup_{0\leq s\leq t} \|X^{h}(s)-X^{k}(s)\|^{2} &\leq 2 \int_{0}^{t}\text{Im} \|X^{h}(s)-X^{k}(s)\|_{V}^{2}ds\\&+2   \int_{0}^{t}\left|\left(\mathcal{U}(s)\left(X^{h}(s)-X^{k}(s)\right), X^{h}(s)-X^{k}(s)\right)\right|ds&\\
& + 2 \int_{0}^{t} \left|\left(g\left(s,u^{0}(s)\right)\left(\dot{h}(s)-\dot{k}(s)\right), X^{h}(s)-X^{k}(s)\right)\right|ds,&
\end{flalign*}
which by applications of H\"older's and Young's inequalities, using \eqref{(2.6)}, \eqref{K5}, and the assumption $u^{0}(t)\in V$, leads to
\begin{eqnarray}
&&\sup_{0\leq s\leq t}\|X^{h}(s)-X^{k}(s)\|^{2} \leq  2 \sqrt{k_{5}} \int_{0}^{t}  \|X^{h}(s)-X^{k}(s)\|^{2}ds\label{difference}\\
&&+2 \int_{0}^{t} \|g(s,u^{0}(s))\|_{L_{2}}\hspace{.08cm} |\dot{h}(s)-\dot{k}(s)|\hspace{.08cm} \|X^{h}(s)-X^{k}(s)\|ds\nonumber\\
&\leq& 2 \sqrt{k_{5}} \int_{0}^{t}  \|X^{h}(s)-X^{k}(s)\|^{2}ds + 2C_{1}\int_{0}^{t} \|X^{h}(s)-X^{k}(s)\|\hspace{.08cm}|\dot{h}(s)-\dot{k}(s)|ds.\nonumber
\end{eqnarray}
Similar to the proof in \cite{Cardon, Chenal}, we use time discretization to estimate the second integral above by a term involving $\|h-k\|_{\infty}$. For {$n\in \mathbb{N}$}  denote $t_{i}^{n}:= iT2^{-n}$ for each $i\in\{0,1,2,..,2^{n}-1\}$ and
consider the step function $\hat\psi_n:[0,T]\to[0,T]$ defined by
\begin{equation} \label{psin2}
\hat\psi_n(s)=t_i^n,  \mbox{ if } s\in \left[ t_i^n, t_{i+1}^n\right),\  i\in\{0,1,...,2^{n}-1\} \mbox{ and }\hat\psi_n(T)=T,
\end{equation}
then using the first upper endpoint, $\frac{T}{2^{n}}$, in the discretization, we can write,
\begin{flalign}
&2C_{1} \int_{0}^{t} \|X^{h}(s)-X^{k}(s)\| \hspace{.08cm}|\dot{h}(s)-\dot{k}(s)| ds \label{sum}\\
&= 2C_{1} \int_{0}^{t} \|X^{h}(s)-X^{k}(s)\| \hspace{.08cm}|\dot{h}(s)-\dot{k}(s)|\hspace{.08cm} 1_{\{t< \frac{T}{2^{n}}\}} ds \nonumber\\
&+ 2C_{1} \int_{0}^{t} \|X^{h}(s)-X^{k}(s)\|\hspace{.08cm} |\dot{h}(s)-\dot{k}(s)|\hspace{.08cm} 1_{\{t\geq \frac{T}{2^{n}}\}} ds\nonumber \\
&\leq 2C_{1} \left\|X^{h}\left(\frac{T}{2^n}\right)- X^{k} \left(\frac{T}{2^n}\right)\right\| \hspace{.08cm}|h(s)-k(s)|\hspace{.08cm} [0,t]\nonumber\\
& + 2C_{1} \sum_{i=0}^{2^{n}-2} \left\| X^{h}\left(t- \frac{i}{2^{n}}T\right) - X^{k}\left(t- \frac{i}{2^{n}}T\right)\right\| \hspace{.04cm} |h(s)-k(s)| \hspace{.04cm}\left[ \frac{(i+1)T}{2^{n}} \wedge t, \frac{(i+2)T}{2^n} \wedge t\right]\nonumber\\
&\leq K(n, T, \hat{N}_{2}(T,M))\hspace{.08cm} \|h(s)-k(s)\|_{\infty}^{2}, \nonumber
\end{flalign}
where the second integral above was written as a sum of the integrand over each time interval from $\frac{T}{2^{n}}$ up to time $t$. We also used the fact that $\|X^{h}\left(\frac{T}{2^n}\right)- X^{k} \left(\frac{T}{2^n}\right)\|$ and $\| X^{h}\left(t- \frac{i}{2^{n}}T\right) - X^{k}\left(t- \frac{i}{2^{n}}T\right)\|$ are constants and applied \eqref{XhVnorm}. Thus, by \eqref{difference} and Gronwall's inequality we obtain the result.
\end{proof}
Henceforth, we restrict $\varepsilon_{0}\in (0,\ve_0^*)$, where $\ve_0^*= \frac{1}{10^{\sqrt{10}}}$ to achieve the LIL, however, the general MDP holds true for any $\varepsilon\in (0,1)$.
\begin{theorem}
Assume \eqref{(2.6)}-\eqref{(2.8)}, then for all  $h\in \mathcal{H}_{0}$ and constants  $R>0, \rho>0$, there exists a constant $\eta>0$ such that for every $\varepsilon \in (0,\varepsilon_{0})$
\begin{equation}\label{(3.2)}
P\left(\|Z^{\varepsilon} -X^{h} \|_{\mbox{\scalebox{0.5}{$\infty$}}}\ge \rho, \Bigg\| \dfrac{1}{\sqrt{2\log \log \frac{1}{\varepsilon}}} W-h\Bigg\|_{\mbox{\scalebox{0.5}{$\infty,0$}}} <\eta\right) \leq \exp\left(-2R\log \log \frac{1}{\varepsilon}\right).
\end{equation}
\end{theorem}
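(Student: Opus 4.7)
The plan is to apply a complex It\^o energy identity to $\phi^{\varepsilon}(t):=Z^{\varepsilon}(t)-X^{h}(t)$, reduce the difference equation to a Gronwall-type bound driven by two stochastic integrals, and then produce the exponential tail by localizing with a stopping time on $\|Z^{\varepsilon}\|$ and invoking the exponential martingale inequality. Writing $W_{\varepsilon}:=W/\sqrt{2\log\log(1/\varepsilon)}$, the conditioning event is $\Omega_{\eta}:=\{\|W_{\varepsilon}-h\|_{\mbox{\scalebox{0.5}{$\infty,0$}}}<\eta\}$.

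Subtracting \eqref{(2.10)} from \eqref{Xhcontrolled} and applying It\^o's formula to $\|\phi^{\varepsilon}\|^{2}$, two favorable cancellations drive the analysis in $H$: $\mathrm{Re}\langle -iA\phi^{\varepsilon},\phi^{\varepsilon}\rangle=0$ since $\langle A\phi,\phi\rangle=\|\phi\|_V^{2}$ is real, while the potential contributes only $2\sqrt{k_0}\|\phi^{\varepsilon}\|^{2}$ via \eqref{K5}. Regrouping the mismatched noise and drift as a diffusion-difference against $dW_{\varepsilon}$ and a smooth-coefficient pairing against $d(W_{\varepsilon}-h)$, I arrive at
\begin{equation*}
\|\phi^{\varepsilon}(t)\|^{2} \leq 2\sqrt{k_0}\!\int_0^t\!\|\phi^{\varepsilon}(s)\|^{2}ds + 2I_1(t) + 2I_2(t) + J(t),
\end{equation*}
where $I_1(t):=\int_0^t\mathrm{Re}\langle\phi^{\varepsilon}(s),[\widetilde g(s,Z^{\varepsilon})-g(s,u^{0})]dW_{\varepsilon}\rangle$, $I_2(t):=\int_0^t\mathrm{Re}\langle\phi^{\varepsilon}(s),g(s,u^{0})d(W_{\varepsilon}-h)\rangle$, and $J$ is the It\^o trace term, which by \eqref{(2.6)} and \eqref{vVnorm} is $O(a^{2}(\varepsilon))$ almost surely.

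For $I_{2}$, I would use integration by parts to exchange the role of $d(W_{\varepsilon}-h)$ and the increments of $g(\cdot,u^{0})^{*}\phi^{\varepsilon}$; on $\Omega_{\eta}$ the boundary term and the drift part are bounded pathwise by $\eta$ multiplied by quantities involving $\sup\|\phi^{\varepsilon}\|$ and $\int\|\phi^{\varepsilon}\|$ (the former absorbable to the left), while the cross-variation yields a further stochastic integral $I_{3}$ of magnitude $O(\eta)$. Gronwall's inequality then yields
\begin{equation*}
\sup_{0\le t\le T}\|\phi^{\varepsilon}(t)\|^{2} \leq C(\eta^{2}+a^{2}(\varepsilon)) + C\sup_{0\le t\le T}|I_{1}(t)| + C\eta\sup_{0\le t\le T}|I_{3}(t)|,
\end{equation*}
so it suffices to show $\sup_{t}|I_{j}(t)|\ge\rho^{2}/(4C)$ holds on a set of probability at most $\exp(-2R\log\log(1/\varepsilon))$ for $j=1,3$.

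The main obstacle, emphasized as the novelty of the paper, is that the integrands of $I_{1}$ and $I_{3}$ are not bounded in $\omega$: by \eqref{(2.8)}, $\|\widetilde g(s,Z^{\varepsilon})-g(s,u^{0})\|_{L_{2}(H_0,H)}^{2}\le 2k_{3}\varepsilon\log\log(1/\varepsilon)\|Z^{\varepsilon}(s)\|^{2}$, so the quadratic variations depend on the $H$-norm of $Z^{\varepsilon}$, ruling out the bounded-coefficient exponential martingale inequality used in \cite{Chow, Peszat, Nualart}. I would handle this by localizing with $\tau_{N}:=\inf\{t\le T:\|Z^{\varepsilon}(t)\|>N\}$: on $[0,T\wedge\tau_{N}]$ each relevant quadratic variation is bounded by a deterministic constant times $N^{2}\varepsilon\log\log(1/\varepsilon)$, so the exponential martingale inequality gives a tail of the form $\exp\bigl(-\rho^{2}/(CN^{2}\varepsilon\log\log(1/\varepsilon))\bigr)$. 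For the complementary event, \eqref{vVnorm} with $p$ chosen large combined with Markov's inequality gives $P(\tau_{N}<T)\le \bar N_{2p}(T)a^{2p}(\varepsilon)/N^{2p}$. Calibrating $N=N(\varepsilon)$ and $\eta$ so that both pieces are dominated by $\exp(-2R\log\log(1/\varepsilon))$ is the delicate final step, and it is the heart of the proof.
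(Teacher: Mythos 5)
Your route is genuinely different from the paper's. The core idea --- trade the bounded-integrand requirement for a stopping-time localization $\tau_N$ on $\|Z^{\varepsilon}\|$ plus a Markov bound on $\{\tau_N<T\}$ --- is a legitimate and arguably more elementary way to handle the unboundedness the paper flags as the main obstacle. The paper instead first applies a Girsanov change of measure to absorb the shift $h$ into a drift term, reducing \eqref{(3.2)} to \eqref{(3.3)} with a modified process $\widetilde{Z}^{\varepsilon}$, and then works with the dyadic time discretization $\hat\psi_n$: the stochastic integral is controlled over each small interval $[t_i^n,t_{i+1}^n)$ by estimating every $2k$-th moment with the explicit Burkholder constant, noting that the factor $(T/2^n)^k$ dominates the $k^{5k}$ growth, and summing $\sum_k \lambda^k/k!\,(\cdot)$ with $k^k/k!\le e^k$ to obtain a finite exponential moment. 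Your split into $I_1$ (diffusion difference against $dW_\varepsilon$) and $I_2$ (smooth coefficient against $d(W_\varepsilon-h)$), with integration by parts on $I_2$ so that $\eta$ enters pathwise, is structurally close to the discretization the paper already uses in its Lemma~1 for the continuity of $h\mapsto X^h$, so the two proofs are consonant where they diverge. Your version buys a cleaner exponential-tail argument and avoids Girsanov entirely; the paper's version buys a uniform treatment of the $h$-shift at the price of heavier combinatorics.

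There is one concrete slip: the claim that the It\^o trace term $J$ is $O(a^2(\varepsilon))$ \emph{almost surely}. By \eqref{(2.11)} and \eqref{(2.6)}, $J(t)=a^2(\varepsilon)\int_0^t\|\widetilde g(s,Z^\varepsilon(s))\|_{L_2}^2\,ds$ picks up a contribution of order $\varepsilon\int_0^t\|Z^\varepsilon(s)\|^2\,ds$, which is $O(\varepsilon)$ only \emph{in expectation} by \eqref{vVnorm}; it is not a.s.\ bounded. This is the same unboundedness you flag for $I_1$ and $I_3$, and the cure is the same: on $\{\tau_N\ge T\}$ with $N^2=o(1/\varepsilon)$ the term is deterministically small, so $J$ must be pushed inside the localization rather than treated pathwise up front. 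You also need a second stopping time on $\|\phi^\varepsilon\|$ itself before invoking the exponential martingale inequality, since the quadratic variation of $I_1$ depends on $\|\phi^\varepsilon\|$ as well as on $\|Z^\varepsilon\|$; this is implicit in your Gronwall step but should be made explicit. With those corrections the calibration you postpone does close: one needs $N(\varepsilon)$ squeezed between roughly $(\log(1/\varepsilon))^{R/p}$ (from Markov on $\{\tau_N<T\}$ with a fixed $p\ge 1$ and \eqref{vVnorm}) and roughly $\rho/\sqrt{\varepsilon\log\log(1/\varepsilon)}$ (from the exponential martingale bound on the localized piece), and these are compatible for all small $\varepsilon$ because any fixed power of $\log(1/\varepsilon)$ is eventually dominated by $\varepsilon^{-1/2}$.
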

\begin{proof}
\indent It may be shown by an application of the Girsanov transformation theorem that inequality \eqref{(3.2)} is implied by the estimate below (see for instance the proof given in \cite{Nualart}),
\begin{equation}\label{(3.3)}
P\left(\|\widetilde{Z}^{\varepsilon} -X^{h} \|_{\mbox{\scalebox{0.5}{$\infty$}}}\ge \rho, \Bigg\| \frac{1}{\sqrt{2\log \log \frac{1}{\varepsilon}}}
W\Bigg\|_{\mbox{\scalebox{0.5}{$\infty,0$}}} <\eta\right) \leq \exp\left(-2R\log \log \frac{1}{\varepsilon}\right),
\end{equation}
where $\widetilde{Z}^{\varepsilon}$ is the variational solution of
\begin{align}\label{(3.4)}
\widetilde{Z}^{\varepsilon}(t) &=
 - i \hspace{.08cm} \int_{0}^{t} A\widetilde{Z}^{\varepsilon}(s) ds
+ \int_{0}^{t} \mathcal{U}(s)\widetilde{Z}^{\varepsilon}(s)ds
  + \frac{1}{\sqrt{2\log \log \frac{1}{\varepsilon}}}
\int_{0}^{t} \widetilde{g}(s,\widetilde{Z}^{\varepsilon}(s))dW(s) \nonumber\\ &\hspace{2cm}+ \int_{0}^{t} \widetilde{g}(s,\widetilde{Z}^{\varepsilon}(s))\dot{h}(s)ds,\hspace{1cm} t\in[0,T], \mbox{ a.e. }\omega\in \Omega.
\end{align}
Similar to the proof of \eqref{vVnorm} we obtain
\begin{equation}\label{Zk}
\mathbb{E}\sup_{0\leq t\leq T} \|\widetilde{Z}^{\varepsilon}(t)\|^{2p} \leq {N}_{2p}(T, M), \hspace{.25cm} \mbox{ and } \hspace{.25cm}
  \mathbb{E}\sup_{0\leq t\leq T} \|\widetilde{Z}^{\varepsilon}(t)\|_{V}^{2}  \leq  {K}(T,M),
\end{equation}
Now to establish \eqref{(3.3)}, we use the time discretization introduced in Lemma 1 to prove that for $h\in \mathcal{H}_{0}$ and any constants $R>0, \rho>0$ and $0<\beta<\rho$, there exists a constant $\eta>0$ and $\varepsilon_{0}\in (0,\ve_0^*)$, such that for all $\varepsilon\in (0,\varepsilon_{0})$ the following two inequalities hold,
\begin{equation}\label{(3.6)}
P\left(\|\widetilde{Z}^{\varepsilon} -\widetilde{Z}^{\varepsilon}(\hat\psi_n(\cdot))\|_{\mbox{\scalebox{0.5}{$\infty$}}}   >\beta\right) \leq \exp\left(-2R\log \log \frac{1}{\varepsilon}\right),
\end{equation}
and
\begin{align}\label{(3.7)}
& P\left(\|\widetilde{Z}^{\varepsilon} -X^{h} \|_{\mbox{\scalebox{0.5}{$\infty$}}} \ge \rho, \Bigg\|\frac{1}{\sqrt{2\log \log \frac{1}{\varepsilon}}}W\Bigg\|_{\mbox{\scalebox{0.5}{$\infty,0$}}} <\eta, \|\widetilde{Z}^{\varepsilon} -\widetilde{Z}^{\varepsilon}(\hat\psi_n(\cdot))\|_{\mbox{\scalebox{0.5}{$\infty$}}} \leq \beta\right)\nonumber\\
& \leq \exp\left(-2R\log \log \frac{1}{\varepsilon}\right).
\end{align}
\indent \textbf{Step 1:} Proof of inequality \eqref{(3.6)}. \\
Observe that
\begin{align}\label{sum}
& P\left(\|\widetilde{Z}^{\varepsilon} -\widetilde{Z}^{\varepsilon}(\hat\psi_n(\cdot))\|_{\mbox{\scalebox{0.5}{$\infty$}}}   >\beta\right) = P\left( \sup_{t_{i}^{n}\leq s< t_{i+1}^n}\|\widetilde{Z}^{\varepsilon}(s) - \widetilde{Z}^{\varepsilon}(\hat\psi_n(s))\|^{2}>{\beta^{2}}\right)&\nonumber\\
&\le  \sum_{i=0}^{2^n-1} P\left( \sup_{t_{i}^{n}\leq s< t_{i+1}^n}\|\widetilde{Z}^{\varepsilon}(s) - \widetilde{Z}^{\varepsilon}(\hat\psi_n(s))\|^{2}>{\beta^2}\right) \,.&
\end{align}
We apply the It\^o's formula to \eqref{(3.4)} and then take the supremum on time, to arrive at,
\begin{align*}
&\sup_{t_{i}^{n}\leq s<t_{i+1}^{n}}\|\widetilde{Z}^{\varepsilon}(s) - \widetilde{Z}^{\varepsilon}(\hat\psi_n(s))\|^{2} \leq 2     \int_{t_{i}^{n}}^{t_{i+1}^{n}}\Big| \langle A\widetilde{Z}^{\varepsilon}(s), \widetilde{Z}^{\varepsilon}(s)- \widetilde{Z}^{\varepsilon}(\hat\psi_n(s))\rangle\Big| ds \\
&+ 2  \int_{t_{i}^{n}}^{t_{i+1}^{n}}\left|  (\mathcal{U}(s)\widetilde{Z}^{\varepsilon}(s), \widetilde{Z}^{\varepsilon}(s)-\widetilde{Z}^{\varepsilon}(\hat\psi_n(s)) )\right|ds&\\
&+ \frac{2}{\sqrt{2\log \log \frac{1}{\varepsilon}}}  \sup_{t_{i}^{n} \leq s<t_{i+1}^{n}} \left|\int_{t_{i}^{n}}^{s} \text{Re} (\widetilde{g}(r, \widetilde{Z}^{\varepsilon}(r))dW(r), \widetilde{Z}^{\varepsilon}(r)-\widetilde{Z}^{\varepsilon}(\hat\psi_n(r)) )\right| \\
& + 2\hspace{.08cm} \int_{t_{i}^{n}}^{t_{i+1}^{n}}\left| (\widetilde{g}(s, \widetilde{Z}^{\varepsilon}(s))\dot{h}(s), \widetilde{Z}^{\varepsilon}(s)-\widetilde{Z}^{\varepsilon}(\hat\psi_n(s)) )\right|ds+ \frac{1}{2\log \log \frac{1}{\varepsilon}} \int_{t_{i}^{n}}^{t_{i+1}^{n}} \|\widetilde{g}(s,\widetilde{Z}^{\varepsilon}(s))\|_{L_{2}}^{2}ds&\\
&= I_{1}^{i} + I_{2}^{i}+I_{3}^{i}+I_{4}^{i}+I_{5}^{i}.
\end{align*}
By \eqref{Zk} and the Cauchy-Schwarz inequality, we have for the first term
\begin{eqnarray*}
\mathbb{E}(I_{1}^{i})  \leq    \mathbb{E} \int_{t_{i}^{n}}^{t_{i+1}^{n}} \|\widetilde{Z}^{\varepsilon}(s)\|_{V}^{2} ds +   \mathbb{E} \int_{t_{i}^{n}}^{t_{i+1}^{n}} \|\widetilde{Z}^{\varepsilon}(s)\|_{V} \|\widetilde{Z}^{\varepsilon}(\hat\psi_n(s))\|_{V}ds
\le C_{1}\hspace{.08cm} K(T,M) (t_{i+1}^{n}-t_{i}^{n}).
\end{eqnarray*}
Also with the help of \eqref{K5}, \eqref{Zk} and Young's inequality, we obtain,
\begin{eqnarray*}
\mathbb{E}(I_{2}^{i})&\leq&   k_{0} \hspace{.08cm} \mathbb{E} \int_{t_{i}^{n}}^{t_{i+1}^{n}} \|\widetilde{Z}^{\varepsilon}(s)\|^{2}ds +  \hspace{.08cm} \mathbb{E} \int_{t_{i}^{n}}^{t_{i+1}^{n}} \|\widetilde{Z}^{\varepsilon}(s) - \widetilde{Z}^{\varepsilon}(\hat\psi_n(s))\|^{2}ds\\
&\leq& C_{2}\hspace{.08cm} N_{2}(T,M) (t_{i+1}^{n}-t_{i}^{n}).
\end{eqnarray*}
Moreover, by \eqref{(2.11)}, \eqref{(2.6)} and \eqref{uHbound} ($\ve=0$) we write
\begin{align*}
\|\widetilde{g}(s,\widetilde{Z}^{\varepsilon}(s))\|^2_{L_2} \le k_1\hspace{.08cm}(1+4\varepsilon\log \log \frac{1}{\varepsilon}\|\widetilde{Z}^{\varepsilon}(s)\|^2+2\|u^0(s)\|^2)\le C_3(1+ \varepsilon\log \log \frac{1}{\varepsilon}\|\widetilde{Z}^{\varepsilon}(s)\|^2),
\end{align*}
where $C_3$ is a constant depending on $k_1, \widetilde{N}_2(T),$ and $\|\gamma\|^2.$ By the Cauchy-Schwarz and   H\"older's inequalities, we have for $h\in \mathcal{H}_{0}$,
\begin{flalign*}
&\mathbb{E}(I_{4}^{i})+ \mathbb{E}(I_{5}^{i}) \leq 2\hspace{.08cm} \mathbb{E} \int_{t_{i}^{n}}^{t_{i+1}^{n}} \|\widetilde{g}(s,\widetilde{Z}^{\varepsilon}(s))\|_{L_{2}} \hspace{.08cm} |\dot{h}(s)|   (\|\widetilde{Z}^{\varepsilon}(s)\|+\|\widetilde{Z}^{\varepsilon}(\hat\psi_n(s))\| ) ds&\\
& +    \frac{C_3}{2\log \log \frac{1}{\varepsilon}}\hspace{.08cm} \mathbb{E}\int_{t_{i}^{n}}^{t_{i+1}^{n}} \left( 1+ \varepsilon \log \log \frac{1}{\varepsilon}\|\widetilde{Z}^{\varepsilon}(s)\|^{2}\right)ds&\\
&\leq 2\hspace{.08cm}\sqrt{M} \left(\mathbb{E} \int_{t_{i}^{n}}^{t_{i+1}^{n}} \|\widetilde{g}(s,\widetilde{Z}^{\varepsilon}(s))\|_{L_{2}}^{2} (\|\widetilde{Z}^{\varepsilon}(s)\| + \|\widetilde{Z}^{\varepsilon}( \hat\psi_n(s))\|)^2ds \right)^{1/2}   &\\
&+ \left(\frac{1}{ \log \log \frac{1}{\varepsilon}}+ \varepsilon N_{2}(T, M)\right)\frac{C_3}{2}\left(t_{i+1}^{n}-t_{i}^{n}\right)&\\
&\leq 4\sqrt{M C_3} \left( N_{2}(T, M)+ \varepsilon \log \log \frac{1}{\varepsilon}N_{4}(T, M)\right)^{1/2} \left(t_{i+1}^{n}-t_{i}^{n}\right)^{1/2}&\\
& + \left(\frac{1}{2\log \log \frac{1}{\varepsilon}}+ \varepsilon  N_{2}(T, M)\right)\frac{C_3}{2} \left(t_{i+1}^{n}-t_{i}^{n}\right).&
\end{flalign*}
Note that for $\ve\in(0,\ve_0)$,  the expressions $\frac{1}{2\log \log \frac{1}{\varepsilon}}$ and $\varepsilon \log \log \frac{1}{\varepsilon}$ are bounded. Thus, we have,
\begin{eqnarray}\label{exponentialbound}
&&\sum_{i=0}^{2^{n}-1} P\left(\|\widetilde{Z}^{\varepsilon} -\widetilde{Z}^{\varepsilon}(\hat\psi_n(\cdot))\|_{\mbox{\scalebox{0.5}{$\infty$}}}   >\beta\right)\nonumber \\
&\leq& \sum_{i=0}^{2^{n}-1} P\left(I_{1}^{i} + I_{2}^{i} + I_{4}^{i} + I_{5}^{i} > \frac{\beta^{2}}{2}\right)  + \sum_{i=0}^{2^{n}-1} P\left(I_{3}^{i} > \frac{\beta^{2}}{2}\right)\nonumber\\
&\leq& \sum_{i=0}^{2^{n}-1} P\left( C(T,M) (t_{i+1}^{n} - t_{i}^{n})^{1/2} > \frac{\beta^{2}}{2}\right) + \sum_{i=0}^{2^{n}-1} P\left(I_{3}^{i} > \frac{\beta^{2}}{2}\right),
\end{eqnarray}
where noting that $t_{i+1}^{n} - t_{i}^{n} < \frac{T}{2^{n}}$, we may drop the first probability above for any $\beta>0$, by setting $n$ to become arbitrary large. To obtain the exponential bound in \eqref{(3.6)} from the probability involving the stochastic integral, $I_{3}^{i}$, many authors such as those in \cite{Chow, Peszat, Eddahbi1, Nualart, Cerrai} have made the assumption that the integrand or the quadratic variation of their stochastic integral is bounded. Here we have the expectation of the integrand is bounded. We proceed as follows implementing some ideas from the proof of Proposition 2.1 in \cite{Seidler}. For a positive number $\lambda \in \left(0, \frac{1}{2e (24)^{2}k_{1}N_{2}(T,M)}\right)$, we apply the Doob's martingale inequality to obtain
\begin{flalign}\label{stochasticterm}
&P\left(I_{3}^{i}\geq \frac{\beta^{2}}{2}\right)&\\
&\leq P\Bigg( \sup_{t_{i}^{n} \leq s<t_{i+1}^{n}} \Big|\int_{t_{i}^{n}}^{s}  (\widetilde{g}(r, \widetilde{Z}^{\varepsilon}(r))dW(r), \widetilde{Z}^{\varepsilon}(r)-\widetilde{Z}^{\varepsilon}(\hat\psi_n(r)) )\Big| \geq \frac{\beta^{2} \sqrt{2\log \log \frac{1}{\varepsilon}}}{4}\Bigg)&\nonumber\\
&\leq \exp\Bigg(-\frac{2 \beta^{4}\lambda \log \log \frac{1}{\varepsilon}}{16}\Bigg)&\nonumber\\
 & \hspace{3cm}\times \mathbb{E}\exp\left(
\sup_{t_{i}^{n} \leq s<t_{i+1}^{n}}\lambda \Big|\int_{t_{i}^{n}}^{s} (\widetilde{g}(r, \widetilde{Z}^{\varepsilon}(r))dW, \widetilde{Z}^{\varepsilon}(r)-\widetilde{Z}^{\varepsilon}(\hat\psi_n(r)) )
\Big|^{2}  \right).&\nonumber
\end{flalign}
Moreover, for $p>1$ and a continuous martingale, $(M_{t})_{t}$, using the fact that in Burkholder-Davis-Gundy inequality,
\begin{equation*}
\mathbb{E} \sup_{0\leq t\leq T} |M_{t}|^{p} \leq C_{p}\hspace{.08cm} \mathbb{E} <M>_{T}^{p/2},
\end{equation*}
the constant $C_{p}$ is $(3p^{3})^{p}$ (see Theorem 1.76 in \cite{Pardoux} for a proof), we have for $k\geq 1$ independent of $n$,
\begin{eqnarray*}\label{CT}
&&\mathbb{E} \sup_{t_{i}^{n} \leq s< t_{i+1}^{n}} \Big|\int_{t_{i}^{n}}^{s}  (\widetilde{g}(r, \widetilde{Z}^{\varepsilon}(r))dW(r), \widetilde{Z}^{\varepsilon}(r)-\widetilde{Z}^{\varepsilon}( \hat\psi_n(r) ) )\Big|^{2k} \\
&\leq& (24)^{2k} k^{6k} \hspace{.08cm}\mathbb{E}\left(\int_{t_{i}^{n}}^{t_{i+1}^{n}} \|\widetilde{g}(s,\widetilde{Z}^{\varepsilon}(s))\|_{L_{2}}^{2}
\|\widetilde{Z}^{\varepsilon}(s)-\widetilde{Z}^{\varepsilon}(\hat\psi_n(s))\|^{2}ds\right)^{k}\nonumber\\
&\leq& (24)^{2k} k^{6k} \hspace{.08cm} \mathbb{E}\left(\int_{t_{i}^{n}}^{t_{i+1}^{n}} 2k_{1}\left(1+ \ve\log \log \frac{1}{\varepsilon}\|\widetilde{Z}^{\varepsilon}(s)\|^{2} \right)\left(\|\widetilde{Z}^{\varepsilon}(s)\|^{2} + \|\widetilde{Z}^{\varepsilon}(\hat\psi_n(s))\|^{2}\right)ds\right)^{k}\nonumber\\
&\leq& (2k_{1})^{k} (24)^{2k} k^{6k} \left( N_{2}(T,M) + \left(\varepsilon \log \log \frac{1}{\varepsilon}\right)^{k} N_{4}(T,M)\right)^{k} \left(\frac{T}{2^{n}}\right)^{k},
\end{eqnarray*}
where we noted that $|t_{i+1}^{n}-t_{i}^{n}|<\frac{T}{2^{n}}$. Comparing the rates of convergence, we observe that $\varepsilon$ tends to zero faster than $1/(\log \log (1/\varepsilon))$ and $\frac{T}{2^{n}} \xrightarrow{n\rightarrow \infty} 0$ at a faster rate than $k^{5} \xrightarrow{k\rightarrow \infty} \infty$. Thus, for small enough $\varepsilon >0$ and for any $k\geq 1$, we may pick $n\geq 1$ large enough to obtain,
\begin{eqnarray*}
&&\mathbb{E} \sup_{t_{i}^{n} \leq s< t_{i+1}^{n}} \Big|\int_{t_{i}^{n}}^{s}  (\widetilde{g}(r, \widetilde{Z}^{\varepsilon}(r))dW(r), \widetilde{Z}^{\varepsilon}(r)-\widetilde{Z}^{\varepsilon}( \hat\psi_n(r) ) )\Big|^{2k}\\
&\leq& \left(2(24)^2 k_{1}N_{2}(T,M)\right)^{k} k^{k} \left(k^{5} \frac{T}{2^{n}}\right)^{k}\leq  \left(2(24)^{2} k_{1}N_{2}(T,M)\right)^{k} k^{k}.
\end{eqnarray*}
We now multiply both sides by $\lambda^{k}/k!$ and take the sum on $k$ to arrive at
\begin{eqnarray*}
&&\mathbb{E} \left(\sum_{k=0}^{\infty} \frac{\lambda^{k}}{k!} \sup_{t_{i}^{n} \leq s<t_{i+1}^{n}} \Big|\int_{t_{i}^{n}}^{s} (\widetilde{g}(r, \widetilde{Z}^{\varepsilon}(r))dW, \widetilde{Z}^{\varepsilon}(r)-\widetilde{Z}^{\varepsilon}(\hat\psi_n(r)) )
\Big|^{2k}\right) \\
&&\leq \sum_{k=0}^{\infty} \frac{\lambda^{k}}{k!} \left(2(24)^{2}k_{1}N_{2}(T,M)\right)^{k} k^{k} \leq \sum_{k=0}^{\infty} \left(2e (24)^{2}k_{1}N_{2}(T,M) \right)^{k} \lambda^{k},
\end{eqnarray*}
where we observed that $\frac{k^{k}}{k!} \leq \sum_{k=0}^{\infty} \frac{k^{k}}{k!} = e^{k}$. Then by the choice of $\lambda$ we have by means of the geometric series,
\begin{eqnarray*}
&&\mathbb{E} \left( \exp \left(\sup_{t_{i}^{n} \leq s<t_{i+1}^{n}} \lambda \Big|\int_{t_{i}^{n}}^{s}  (\widetilde{g}(r, \widetilde{Z}^{\varepsilon}(r))dW, \widetilde{Z}^{\varepsilon}(r)-\widetilde{Z}^{\varepsilon}(\hat\psi_n(r)) )
\Big|^{2}\right)\right)\\
&\leq& \frac{\lambda 2e (24)^{2}k_{1}N_{2}(T,M)}{1- \lambda 2e (24)^{2}k_{1}N_{2}(T,M)},
\end{eqnarray*}
which in \eqref{stochasticterm} yields,
\begin{equation*}
P\left(I_{3}^{i} \geq \frac{\beta^{2}}{2}\right)\leq \exp\left(-\frac{2\beta^{4} \lambda \log \log \frac{1}{\varepsilon}}{16}\right) \frac{\lambda 2e (24)^{2}k_{1}N_{2}(T,M)}{1- \lambda 2e (24)^{2}k_{1}N_{2}(T,M)},
\end{equation*}
which holds for any $0<\beta<\rho$ and $\lambda \in \left(0, \frac{1}{2e (24)^{2}k_{1}N_{2}(T,M)}\right)$. Thus, we may let $\lambda = \frac{1}{4e (24)^{2}k_{1}N_{2}(T,M)}$ and for any $R>0$, we may choose $0<\beta<\rho$ small enough to obtain \eqref{(3.6)}.\\

\textbf{Step 2:} Proof of inequality \eqref{(3.7)}. \\
Note that inequality \eqref{(3.7)} may be written as
\begin{eqnarray}\label{secondinequality2}
&&P\left(\|\widetilde{Z}^{\varepsilon}(\widetilde{\psi}_{n}(\cdot)) - X^{h}\|_{\infty} \geq \varphi, \left\|\frac{1}{\sqrt{2\log \log \frac{1}{\varepsilon}}}W\right\|_{\infty,0} <\eta, \|\widetilde{Z}^{\varepsilon}-\widetilde{Z}^{\varepsilon}(\hat{\psi}_{n}(\cdot))\|_{\infty} \leq \beta \right)\nonumber\\
&\leq& \exp\left(-2R\log \log \frac{1}{\varepsilon}\right),
\end{eqnarray}
where $\varphi:= \rho -\beta$ from the observation,
\begin{equation*}
\rho\leq \|\widetilde{Z}^{\varepsilon}-X^{h}\|_{\infty} \leq \beta + \| \widetilde{Z}^{\varepsilon}(\hat{\psi}_{n}(\cdot))- X^{h}\|_{\infty}.
\end{equation*}
Without loss of generality we assume $0<t_{i}^{n}\leq t$ and find by It$\hat{o}$'s formula,
\begin{flalign*}
&|X^{h}(t) - \widetilde{Z}^{\varepsilon}(\hat{\psi}_{n}(t))|^{2} \leq 2\hspace{.08cm} \text{Im} \int_{t_{i}^{n}}^{t}\|X^{h}(s)-\widetilde{Z}^{\varepsilon}(s)\|_{V}^{2} ds&\\
& + 2 \hspace{.08cm} \text{Re} \int_{t_{i}^{n}}^{t} \left(\mathcal{U}(s)\left(X^{h}(s)-\widetilde{Z}^{\varepsilon}(s)\right), X^{h}(s)- \widetilde{Z}^{\varepsilon}(s)\right)ds &\\
& + 2 \hspace{.08cm} \text{Re} \int_{t_{i}^{n}}^{t} \left(g(s,u^{0}(s))\dot{h}(s), X^{h}(s)- \widetilde{Z}^{\varepsilon}(s)\right)ds&\\
& + 2 \hspace{.08cm} \text{Re} \int_{0}^{t_{i}^{n}} \left(\left(g(s,u^{0}(s))- \widetilde{g}(s, \widetilde{Z}^{\varepsilon}(s))\right)\dot{h}(s), X^{h}(s)- \widetilde{Z}^{\varepsilon}(s)\right)ds&\\
&+ \frac{1}{2\log \log \frac{1}{\varepsilon}} \int_{0}^{t_{i}^{n}} \|\widetilde{g}(s, \widetilde{Z}^{\varepsilon}(s))\|_{L_{2}}^{2}ds&\\
&+ \frac{2}{\sqrt{2\log \log \frac{1}{\varepsilon}}} \int_{0}^{t_{i}^{n}} \left|\left(\widetilde{g}(s,\widetilde{Z}^{\varepsilon}(s))dW(s), X^{h}(s)- \widetilde{Z}^{\varepsilon}(s)\right)\right|&\\
&= J_{1} + J_{2} + J_{3} + J_{4} + J_{5} + J_{6}. &
\end{flalign*}
Then by introducing the terms,
\begin{eqnarray*}
J_{7}&:=&  \frac{2}{\sqrt{2\log \log \frac{1}{\varepsilon}}} \int_{0}^{t_{i}^{n}} \left(\widetilde{g}(s, \widetilde{Z}^{\varepsilon}(s))- \widetilde{g}(s, \widetilde{Z}^{\varepsilon}(s_{i}^{n}))dW(s), \widetilde{Z}^{\varepsilon}(s)-\widetilde{Z}^{\varepsilon}(s_{i}^{n})\right),\\
J_{8}&:=& \frac{2}{\sqrt{2\log \log \frac{1}{\varepsilon}}} \int_{0}^{t_{i}^{n}} \left(\widetilde{g}(s, \widetilde{Z}^{\varepsilon}(s_{i}^{n})dW(s), \widetilde{Z}^{\varepsilon}(s)-\widetilde{Z}^{\varepsilon}(s_{i}^{n}))\right),\\
J_{9}&:=& \frac{2}{\sqrt{2\log \log \frac{1}{\varepsilon}}} \int_{0}^{t_{i}^{n}} \left(\widetilde{g}(s, \widetilde{Z}^{\varepsilon}(s)dW(s), \widetilde{Z}^{\varepsilon}(s^{n}_{i})-X^{h}(s))\right),
\end{eqnarray*}
we may represent the probability in \eqref{secondinequality2} as the following sum,
\begin{flalign*}
& P\left(J_{1}+J_{2}+J_{3}+J_{4}+J_{5}\geq \frac{\varphi}{4} \right)& \\
&+ P\left(J_{7} \geq \frac{\varphi}{4}, \hspace{.08cm}\left\|\frac{1}{\sqrt{2\log \log \frac{1}{\varepsilon}}} W\right\|_{\infty, 0}\leq \eta, \hspace{.08cm} \|\widetilde{Z}^{\varepsilon}- \widetilde{Z}^{\varepsilon}(\hat{\psi}_{n}(\cdot))\|_{\infty} \leq \beta\right)&\\
&+ P\left(J_{8} \geq \frac{\varphi}{4}, \hspace{.08cm}\|\widetilde{Z}^{\varepsilon}- \widetilde{Z}^{\varepsilon}(\hat{\psi}_{n}(\cdot))\|_{\infty} \leq \beta\right)+ P\left(J_{9} \geq \frac{\varphi}{4}\right)&\\
&= P_{1}+P_{2}+P_{3}+P_{4}.&
\end{flalign*}
Notice that by \eqref{K5}, \eqref{(2.6)}, \eqref{(2.8)} and Young's inequality,
\begin{flalign*}
&\mathbb{E}(J_{3}+J_{4}) \leq 2\sqrt{M} \mathbb{E} \left( \int_{t_{i}^{n}}^{t} k_{1} \left(1+ \|u^{0}(s)\|^{2}\right) \|X^{h}(s)- \widetilde{Z}^{\varepsilon}(s)\|^{2}ds\right)^{1/2}&\\
&+ 2\varepsilon\hspace{.08cm} k_{3}^{2}  \log \log \frac{1}{\varepsilon}\hspace{.08cm} \mathbb{E} \int_{0}^{t_{i}^{n}} \|\widetilde{Z}^{\varepsilon}(s)\|^{2} |\dot{h}(s)|ds + \frac{1}{2} \hspace{.08cm}\mathbb{E} \int_{0}^{t_{i}^{n}}\|X^{h}(s)- \widetilde{Z}^{\varepsilon}(s)\|^{2}|\dot{h}(s)|ds&\\
&\leq 2\sqrt{M k_{1}} \left(N_{2}(T) (\varepsilon +\|\gamma\|^{2})\right)^{1/2} \left(\hat{N}_{2}(T,M) + N_{2}(T,M)\right)^{1/2} |t-t_{i}^{n}|^{1/2}&\\
&+ 2 \varepsilon \hspace{.08cm}k_{3}^{2}  \log \log \frac{1}{\varepsilon}\hspace{.08cm} N_{2}(T,M) \sqrt{M} |t_{i}^{n}|^{1/2} + \frac{1}{2} \left(\hat{N}_{2}(T,M) + N_{2}(T,M)\right)\sqrt{M} |t_{i}^{n}|^{1/2}&\\
&=: \widetilde{K}_{1}(\varepsilon,T,M)|t-t_{i}^n|^{1/2}+ \widetilde{K}_{2}(T, M)|t_{i}^n|^{1/2},&
\end{flalign*}
leading to,
\begin{eqnarray*}
&&\mathbb{E}\left(J_{1}+J_{2}+J_{3}+J_{4}+J_{5}\right) \leq \widetilde{K}_{1}(\varepsilon,T,M)|t-t_{i}^n|^{1/2}+ \widetilde{K}_{2}(\varepsilon, T, M)|t_{i}^n|\\
&&+ 2\sqrt{k_{0}} \left(\hat{N}_{2}(T,M) + N_{2}(T,M)\right) |t-t_{i}^{n}|^{1/2} \\
&&+ \frac{k_{1}}{2\log \log \frac{1}{\varepsilon}} \left(1+2\varepsilon \log \log \frac{1}{\varepsilon} N_{2}(T,M) + N_{2}(T)+ \|\gamma\|^{2}\right)|t_{i}^{n}|.
\end{eqnarray*}
Hence, probability $P_{1}$ is zero by an application of Chebyshev inequality and letting $n$ become arbitrary large since both $|t-t_{i}^{n}|$ and $|t_{i}^{n}|$ are less than $\frac{Ti}{2^{n}}$. For the second probability, we write the stochastic integral as a Riemman sum as follows.
\begin{flalign*}
&J_{7} = \frac{2}{\sqrt{2\log \log \frac{1}{\varepsilon}}}  \sum_{i=0}^{2^{n}-1} \|\widetilde{g}(t, \widetilde{Z}^{\varepsilon}(t))- \widetilde{g}(t, \widetilde{Z}^{\varepsilon}(t_{i}^{n}))\|_{L_{2}} \|\widetilde{Z}^{\varepsilon}(t)-\widetilde{Z}^{\varepsilon}(t_{i}^{n})\| \left(W(t_{i+1}^{n}) - W(t_{i}^{n})\right)&\\
&\leq \frac{2\sqrt{k_{3}}}{\sqrt{2\log \log \frac{1}{\varepsilon}}} \sum_{i=0}^{2^{n}-1} \|\widetilde{Z}^{\varepsilon}(t)-\widetilde{Z}^{\varepsilon}(t_{i}^{n})\|^{3/2} \left(W(t_{i+1}^{n}) - W(t_{i}^{n})\right)\leq \frac{2\sqrt{k_{3}}}{\sqrt{2\log \log \frac{1}{\varepsilon}}} \beta^{3/2} (2\eta) 2^{n}.&
\end{flalign*}
Therefore, with
\begin{equation*}
 \eta= \frac{\varphi \sqrt{2\log \log \frac{1}{\varepsilon}}}{2^{n+4} \sqrt{k_{3}} \beta^{3/2}},
 \end{equation*}
 the second probability, $P_{2}$, becomes zero. Furthermore, the third and fourth probabilities are used to obtain the exponential bound in \eqref{secondinequality2} by applying the same technique as in the proof of the exponential bound for \eqref{stochasticterm} in the previous step.
\end{proof}

 \section{The Strassen's compact LIL} \label{sec5}
Using the Freidlin-Wentzell inequality derived in the previous section, we prove the Strassen's compact LIL for the family $\{{Z^{\varepsilon}}\}_{\varepsilon\in (0,\varepsilon_{0})}$. We first provide a general definition of this type of LIL and for more background and similar results we recommend \cite{Ouahra, Caramellino, Eddahbi, Kouritzin,(38)}.
\begin{definition}(Strassen's Compact LIL).
A sequence $\{X_{j}\}_{j\geq 1}$ taking values in the space $(\mathcal{E},d)$ satisfies the Strassen's compact LIL if it is relatively compact and there exists a compact set  ${\mathcal K}$ in $\mathcal{E}$ such that the limit set of $\{X_{j}\}_{j\geq 1}$ is exactly ${\mathcal K}$.
\end{definition}
\begin{proof}[Proof of Theorem 2]
The family, $\{Z^{\varepsilon}\}_{\varepsilon \in (0,\varepsilon_{0})}$ is relatively compact in $\mathcal{C}([0,T];H)$, since by \eqref{vVnorm}, $\{Z^{\varepsilon}\}_{\varepsilon \in (0,\varepsilon_{0})}$ is  bounded in probability in space $W^{\alpha,2}(0,T;V)$, where $\alpha\in (1/2, 1)$ and Theorem 2.2 of \cite{(25)} may be applied to obtain that $\{Z^{\varepsilon} \}_{\varepsilon \in (0,\varepsilon_{0})}$ is tight in $\mathcal{C}([0,T];H)$.\\
\indent To ensure that the set $L$ given in Theorem \ref{TH2} is the set of limit points for $\{Z^{\varepsilon}\}_{\varepsilon \in (0,\varepsilon_{0})}$, we let $X^{h} $ be the unique solution of \eqref{controlled} with $I(X^{h} )\leq \frac{M}{2}$ and show that for every $\varepsilon>0$,
\begin{equation}\label{(4.7)}
P\left(\|Z^{\varepsilon} -X^{h}  \|_{\mbox{\scalebox{0.5}{$\infty$}}}\leq \varepsilon \hspace{.08cm}\text{ i.o. }\right)=1.
\end{equation}
For better presentation, we denote $\varepsilon:= \frac{1}{c^{j}}$ with $c>1$ and let $j\rightarrow \infty$. By the Strassen's compact LIL for Brownian paths (see \cite{(38)}), we have for the $Q$-Wiener process, $W$,
\begin{equation}\label{(4.8)}
P\left(\limsup_{j\rightarrow \infty} \left\|\frac{1}{\sqrt{2\log \log c^{j}}}W-h\right\|_{\mbox{\scalebox{0.5}{$\infty,0$}} } \leq \eta\right)=1,
\end{equation}
and by the Freidlin-Wentzell inequality \eqref{(3.2)}, for all $R>0, \rho>0$,
\begin{equation*}
P\left(\|Z^{\frac{1}{c^{j}}} -X^{h} \|_{\mbox{\scalebox{0.5}{$\infty$}}}>\rho, \left\|\frac{1}{\sqrt{2\log \log c^{j}}}W-h\right\|_{\mbox{\scalebox{0.5}{$\infty,0$}}} <\eta\right) \leq \exp\left(-2R\log \log c^{j}\right).
\end{equation*}
Since,
\begin{equation*}
 \exp\left(-2R\log \log c^{j}\right)\leq \frac{K}{j^{2R}},
\end{equation*}
we obtain \eqref{(4.7)} by applying the Borel-Cantelli lemma to the above Freidlin-Wentzell inequality and noting \eqref{(4.8)}.\\
\indent To verify that the set $L$ is the only limit set, we let $J:= \{g :\|g(t)-L\|_{\infty} \geq \varepsilon\}$ for some $\varepsilon>0$, implying that for $g\in J$, there exists $\delta>0$ such that $I(g)>\frac{M}{2}+\delta$. Since we have proved the MDP, then by its definition, we obtain for the closed set, $J$,
\begin{equation*}
\limsup_{j\rightarrow \infty} \frac{1}{2\log \log c^{j}} \log P\left(Z^{\frac{1}{c^j}}\in J\right) \leq -I(g)\leq -\left(\frac{M}{2}+\delta\right),
\end{equation*}
leading to,
\begin{equation*}
P\left(Z^{\frac{1}{c^{j}}}\in J\right) \leq \exp\left(-2\left(\frac{M}{2}+\delta\right)\log \log c^{j}\right)\leq \frac{1}{j^{M+2\delta}},
\end{equation*}
on which we may again apply the Borel-Cantelli lemma and obtain the convergence a.s. of $Z^{\frac{1}{c^{j}}}(t)$ to $L$ in $\mathcal{C}([0,T];H)$ as $j\rightarrow \infty$.
\end{proof}

\section{Azencott method and weak convergence approach}
Here we compare the two methods often applied in the literature to prove the LDP: the weak convergence approach, which was applied in \cite{Fatheddin2} to prove the MDP for \eqref{originalequation} and the Azencott method employed here to obtain the same result. For more examples of results on the LDP or MDP for SPDEs we refer the reader to \cite{Bessaih, Wang, Chueshov, Duan, Yang, Hu, Ren, Fatheddin3, Nualart3} for the weak convergence approach and to \cite{Nualart2, Peszat, Zhang} for the Azencott method. See Section 3 for more references for the Azencott method.   \\
\indent For the weak convergence approach, following \cite{BDM}, the space considered for the controlled function $h(\cdot)$ in $X^{h}$ given in \eqref{controlled} when the noise is Q-Wiener process is $\mathcal{P}_{M}$ defined below,
\begin{flalign*}
&\mathcal{P}_{2}:=  \left\{h:(0,T)\times \Omega\to H_{0}, \, \text{$(\mathcal{F}_{t})_{t\in [0,T]}$-predictable process}: \int_{0}^{T} |h(s)|_{0}^{2}ds <\infty\  \text{$P$-a.s.}\right\},&\\
&S_{M} := \left\{h \in L^{2}(0,T;H_{0}): \int_{0}^{T} |h(s)|_{0}^{2}ds \leq M\right\},&\\
&\mathcal{P}_{M} := \left\{h\in \mathcal{P}_{2}: h(\omega)\in S_{M}\hspace{.2cm} P\text{-a.s.}\right\},&
\end{flalign*}
where $M<\infty$ is a positive constant, where the space for other noise types such as Brownian sheet or infinite sequence of independent, standard  Brownian motions is defined similarly. Then based on this approach, we let the unique mild solution to the original SPDE be denoted by $\mathcal{G}^{\varepsilon}\left(x^{\varepsilon}, \sqrt{\varepsilon} W(\cdot)\right)$, the controlled PDE by $\mathcal{G}^{0}\left(x, \int_{0}^{\cdot}h(s)ds\right)$ and the stochastic controlled equation by $\mathcal{G}^{\varepsilon}\left(x^{\varepsilon}, \sqrt{\varepsilon} W(\cdot)+ \int_{0}^{\cdot} h_{\varepsilon}(s)ds\right)$ and prove the following two conditions to obtain the LDP with rate function,
\begin{align}\label{rate}
I(v) =
\frac{1}{2}\displaystyle \inf\left\{\int_{0}^{T} |h(s)|_{0}^{2}ds: h\in L^2([0,T];H_0)   \mbox{ with } v= \mathcal{G}^{0}\left(\int_{0}^{\cdot}h(s)ds\right)\right\},
\end{align}
where the infimum of the empty set is taken to be infinity.\\
\indent \emph{Conditions for the weak convergence approach:} Suppose $x, x^{\varepsilon}$ take values in Polish space, $\mathcal{E}_{0}$ and let $\mathcal{E}$ denote the Polish space in which the LDP is considered.
\begin{enumerate}
\item For a compact set $K$ in $\mathcal{E}_{0}$,
\begin{equation*}
  \Gamma_{M,K} = \left\{\mathcal{G}^{0}\left(x, \int_{0}^{\cdot} u(s)ds\right); u\in S_{M}, x\in K\right\},
  \end{equation*}
  is a compact subset of $\mathcal{E}$, where $M<\infty$ is a positive constant.
\item  For families, $\{h_{\varepsilon}\}_{\varepsilon>0} \subset \mathcal{P}_{M}$, $\{x^{\varepsilon}\}_{\varepsilon>0} \subset \mathcal{E}_{0}$, if $h_{\varepsilon}\xrightarrow{d} h$ and $x^{\varepsilon}\rightarrow x$ as $\varepsilon \rightarrow 0$, then
    \begin{equation*}
  \mathcal{G}^{\varepsilon} \left(x^{\varepsilon}, \sqrt{\varepsilon} W(\cdot) + \int_{0}^{\cdot} h_{\varepsilon}(s)ds\right) \xrightarrow{d}
  \mathcal{G}^{0}\left(x, \int_{0}^{\cdot} h(s)ds\right).
  \end{equation*}
  \end{enumerate}
  Thus, the conditions aim to prove that the family of controlled PDEs corresponding to each $h\in S_{M}$, is a compact set and the stochastic controlled equation converges in distribution to controlled PDE if $h_{\varepsilon}\xrightarrow{d} h$ and $x^{\varepsilon}\rightarrow x$ as $\varepsilon \rightarrow 0$. \\
  \indent On the other hand, as mentioned at the beginning of Section 3, the Azencott method requires the following two conditions. \\
  \indent \emph{Conditions for Azencott method:} Let $\{X_{1}^{\varepsilon}\}_{\varepsilon>0}$ be a process taking values in the Polish space $(\mathcal{E}_{1}, d_{1})$ that satisfies the LDP in $(\mathcal{E}_{1}, d_{1})$ with rate function, $\widetilde{I}: \mathcal{E}_{1} \rightarrow [0,\infty]$. Let $\{X_{2}^{\varepsilon}\}_{\varepsilon>0}$ be the process we seek to prove the LDP for and assume it takes values in the Polish space $(\mathcal{E}_{2}, d_{2})$.
  \begin{enumerate}
  \item The map $\Phi: \{g\in \mathcal{E}_{1}: \widetilde{I}(g)<\infty\} \rightarrow \mathcal{E}_{2}$ is continuous in the topology of $\mathcal{E}_{1}$ when restricted to the compact sets $\{\widetilde{I}\leq a\}_{0<a<\infty}$.
      \item For all $a>0, R>0, \rho>0$, there exists constants $\eta>0, \varepsilon_{0}>0$ such that for all $0<\varepsilon\leq \varepsilon_{0}, g\in \mathcal{E}_{1}$ and $\widetilde{I}(g)\leq a$ the Freidlin-Wentzell inequality given in \eqref{FreidlinWentzell} holds.
  \end{enumerate}
  In this setting, since the LDP is obtained with the rate function,
  \begin{equation*}
  I(f) = \inf \{\widetilde{I}(g): g\in \mathcal{E}_{1}, \Phi(g) = f\},
  \end{equation*}
  then map $\Phi$ refers to the map $h\mapsto X^{h}$ and based on the first condition of this method, one has to use the space in which LDP is known for $\{X_{1}^{\varepsilon}\}_{\varepsilon>0}$ to prove its continuity restricted to sets $\{\widetilde{I}\leq a\}_{0<a<\infty}$. For example, here we proved this continuity result in the uniform topology to match the space of $\{W/\sqrt{2\log \log (1/\varepsilon)}\}_{\varepsilon>0}$. For the first condition in the weak convergence approach, many authors such as those in \cite{Cheng, Hu, Li, Xu}, have proved the continuity of the map $h\mapsto X^{h}$ in the weak topology to achieve that the set $\Omega_{M,K}$ is compact, since the space $S_{M}$ in which $h$ takes values in is a compact set. There is also a slight variation in the space in which the function $h(\cdot)$ takes values in, where in the weak convergence approach it is in $\mathcal{P}_{M}$ and in Azencott method it is typically taken to be in the Cameron-Martin space associated with the noise used for the SPDE studied. However, each requires this function or its derivative to be in $L^{2}([0,T]; X)$, where $X$ is the corresponding space. Thus, the first condition in both methods aims to prove the same result with only some differences in spaces used to achieve it. \\
  \indent To compare the second condition in both methods, let us consider the inequality \eqref{(3.3)} used in this paper to prove the Friedlin-Wentzell inequality and translate it to the notation employed in the weak convergence approach as follows.
  \begin{flalign*}
  &P\left(\left\|\mathcal{G}^{\varepsilon}\left(x^{\varepsilon}, \sqrt{\varepsilon}W(\cdot)+ \int_{0}^{\cdot} h_{\varepsilon}(s)ds\right) - \mathcal{G}^{0}\left(x, \int_{0}^{\cdot} h(s)ds\right)\right\|_{\infty} \geq \rho, \left\|\frac{W}{\sqrt{2\log \log \frac{1}{\varepsilon}}} \right\|_{\infty, 0} <\eta\right)&\\
  & \leq \exp\left(-2R \log \log \frac{1}{\varepsilon}\right),&
  \end{flalign*}
  which must be true for any $R>0$, $\rho>0$ and $0<\varepsilon\leq \varepsilon_{0}$ for a fixed $\varepsilon_{0}>0$. Hence, as $\varepsilon\rightarrow 0$, $\mathcal{G}^{\varepsilon}\left(x^{\varepsilon}, \sqrt{\varepsilon}W(\cdot)+ \int_{0}^{\cdot} h_{\varepsilon}(s)ds\right)$ converges in probability to $\mathcal{G}^{0}\left(x, \int_{0}^{\cdot} h(s)ds\right)$ and thus in distribution, matching the second condition in the weak convergence approach.

  \section*{Acknowledgements}
  P. Fatheddin is thankful to Hannelore Lisei for the useful conversations on the exponential inequality needed for the Azencott method and for her helpful comments in preparing this article.

\end{document}